\numberwithin{equation}{section}
\definecolor{purple}{rgb}{0.9,0,0.8}
\definecolor{gray}{rgb}{0.7,0.7,0.7}
\newtheorem{thm}{Theorem}[section]
\newtheorem{cor}[thm]{Corollary}
\newtheorem{lem}[thm]{Lemma}
\newtheorem{ppn}[thm]{Proposition}
\newtheorem{ass}[thm]{Assumption}
\theoremstyle{definition}
\newtheorem{defn}[thm]{Definition}
\newtheorem{remark}[thm]{Remark}
\newcommand{\beq}{\begin{equation}}
\newcommand{\eeq}{\end{equation}}
\newcommand{\E}{\mathbb{E}}
\newcommand{\N}{\mathbb{N}}
	\renewcommand{\P}{\mathbb{P}}
\newcommand{\R}{\mathbb{R}}
\newcommand{\Z}{\mathbb{Z}}
\newcommand{\cC}{\mathcal{C}}
\newcommand{\cE}{\mathcal{E}}
\newcommand{\cF}{\mathcal{F}}
\newcommand{\cJ}{\mathcal{J}}
\newcommand{\cR}{\mathcal{R}}
\newcommand{\bDel}{\mathbf{\Delta}}
\newcommand{\by}{\mathbf{y}}
\newcommand{\bz}{\mathbf{z}}
\newcommand{\wt}{\widetilde}
\newcommand{\wh}{\widehat}
\newcommand{\grad}{\nabla}
\renewcommand{\setminus}{\backslash}
\newcommand{\abbr}[1]{{\sc\lowercase{#1}}}
\begin{document}

\title[Persistence with non-summable correlations]{Persistence of Gaussian processes: \\ 
non-summable correlations}
\date{\today}

\author[A.\ Dembo]{Amir Dembo$^*$}
\author[S.\ Mukherjee]{Sumit Mukherjee}
\address{$^\ast$Department of Mathematics, Stanford University, Building 380, 
Sloan Hall, Stanford, CA 94305, USA}
\address{Department of Statistics, Columbia University,
1011 SSW, 1255 Amsterdam Avenue, New York, NY 10027, USA}

\thanks{$^\ast$Research partially supported by NSF 
grant DMS-1106627.}

\subjclass[2010]{Primary 60G15; Secondary 82C24}
\keywords{
Persistence probabilities, Gaussian processes, 
regularly varying, $\grad \phi$-interface.}

\maketitle
\begin{abstract} 
Suppose the auto-correlations of real-valued, centered 
Gaussian process $Z(\cdot)$ are non-negative and 
decay as $\rho(|s-t|)$ for some $\rho(\cdot)$ 
regularly varying at infinity of order 
$-\alpha \in [-1,0)$.
With $I_\rho(t)=\int_0^t \rho(s)ds$ its primitive, 
we show that
the persistence probabilities decay rate of
$ -\log\P(\sup_{t \in [0,T]}\{Z(t)\}<0)$ is precisely of order 
$(T/I_\rho(T)) \log I_\rho(T)$, 
thereby closing the gap between the lower and 
upper bounds of \cite{NR}, which stood as 
such for over fifty years. We demonstrate its 
usefulness by sharpening recent results of 
\cite{Sak} about the dependence on $d$ of 
such persistence decay for the Langevin 
dynamics of certain $\grad \phi$-interface 
models on 
$\Z^d$.
\end{abstract}

\section{Introduction}

Persistence probabilities, namely the asymptotic 
of $\P(\sup_{t\in[0,T]}\{Z(t)\}<0)$ as $T \to \infty$, 
have a fairly long history in probability theory 
with the case of stationary, centered, 
Gaussian processes $Z(t)$ receiving much attention
(c.f. \cite{Slep,NR,Shur,Wat,Pic,SM,DPSZ,Mol,AS} 
and the references therein). 
In particular, for non-negative, stationary auto-correlation $A(s,t)=\E [Z(s) Z(t)]$, it directly follows by an application of Slepian's lemma and sub-additivity 
that the limit
\begin{equation}\label{eq:exp-coef}
b(A):=-\lim_{T\rightarrow\infty}\frac{1}{T}\log \P(\sup_{t\in[0,T]}\{Z(t)\}<0)
\end{equation}
exists in $[0,\infty]$, and it is easy to see that 
$b(A)$ is finite whenever $Z(\cdot)$ has continuous 
sample paths (c.f. Lemma \ref{clm1}). Hereafter for any stationary non-negative correlation function $A(\cdot,\cdot)$ we use the notation $b(A)$ to denote the limit  defined in \eqref{eq:exp-coef}. 

For such processes 
the positivity of $b(A)$, namely the exponential decay of the 
corresponding persistence probabilities, is equivalent to integrability of 
$A(0,\cdot)$, under certain regularity condition on $\tau \mapsto A(0,\tau)$. 

For many processes of interest $\tau \mapsto A(s,s+\tau)$ is non-integrable, with 
$b(A)=0$
(see for example Remark \ref{rem:al-corr},
Corollary \ref{sak_improved} and Remark \ref{rem:fbm}). 
In such cases \eqref{eq:exp-coef} is of limited value, 
and the finer, sub-exponential persistence probability decay 
rate, is of much interest. Indeed, focusing on 
the special case where $A(s,t)$ decays as $|t-s|^{-\alpha}$ 
for some $\alpha\in (0,1]$, already in 1962, 
Newell and Rosenblatt \cite{NR} showed that 
\begin{align}\label{eq:under-one}
T^\alpha \lesssim -\log \P(\sup_{t \in [0,T]}\{Z(t)\}<0)\lesssim& T^{\alpha}\log T,\quad \text{ for } \;\; 0<\alpha<1,\\
\frac{T}{\log T} \lesssim -\log \P(\sup_{t \in [0,T]}\{Z(t)\}<0)\lesssim& T,\qquad \qquad \text{ for } \;\; \alpha=1.
\label{eq:equal-one}
\end{align}
Hereafter, for any non-negative functions $a_1(T)$, $a_2(T)$,
we denote by $a_1(T)\lesssim a_2(T)$ the existence of $C<\infty$, 
possibly depending on the law of $\{Z(\cdot)\}$, such that 
$a_1(T)\le C a_2(T)$ for all $T$ large enough, 
with 
$a_1(T) = \Theta(a_2(T))$ when both 
$a_1(T) \lesssim a_2(T)$ and $a_2(T)\lesssim a_1(T)$.

To the best of our knowledge, the gap between 
the upper and lower bounds of \cite{NR}, as 
in \eqref{eq:under-one}--\eqref{eq:equal-one},
has never been improved. Our main result closes this gap, by determining the correct 
decay rate of the relevant persistence probabilities, in case of asymptotically stationary 
non-negative $A(s,t)$ that are regularly decaying in $|t-s|$ large. To this end, we
shall make use of the following definition.
\begin{defn}
For $\alpha\ge 0$, let $\mathcal{R}_{\alpha}$ denote 
the collection of measurable, regularly varying 
of order ${-\alpha}$ functions 
$\rho:[0,\infty) \mapsto (0,1]$,  i.e. for every $\lambda>0$ one has
$$\lim_{t\rightarrow\infty}\frac{\rho(\lambda t)}{\rho(t)}=\lambda^{-\alpha}.$$
Associate
to each $\rho\in \mathcal{R}_\alpha$ the 
primitive function $I_\rho:(0,\infty)\mapsto (0,\infty)$
such that 
\begin{align}\label{eq:Irho-def}
I_\rho(t):=\int\limits_0^t\rho(s)ds \,,
\end{align}
and the asymptotic \emph{persistence decay rate}
\begin{align}\label{eq:arho-def}
a_\rho(t):= \frac{t \log I_\rho(t)}{I_\rho(t)} \,.
\end{align}
\end{defn}
\begin{thm}\label{gen2}
Suppose the 
centered Gaussian process
$\{Z(t)\}_{t\ge 0}$ has non-negative auto-correlation
\begin{equation}\label{eq:A-def} 
A(s,t) := \frac{\E[Z_s Z_t]}{\sqrt{\E[Z_s^2]\E[Z_t^2]}} \,.
\end{equation}
\begin{enumerate}[(a)]
\item If some $\alpha\in (0,1]$
and $\rho\in \mathcal{R}_\alpha$ with
$I_\rho(\infty)=\infty$, are such that there exists $\widetilde{\eta}>0$ satisfying
\begin{equation}
\label{nonsum2}
\limsup\limits_{t,\tau\rightarrow\infty,\tau \le \widetilde{\eta} t}\frac{ A(t,t+\tau)}{\rho(\tau)}<\infty \,,
\end{equation}
then, 
\begin{equation}\label{eq:ubd-pers}
- \limsup\limits_{T\rightarrow\infty}\frac{1}{a_\rho(T)} \log \P(\sup_{t \in [0,T]}\{Z(t)\}<0) > 0\,.
\end{equation}
\noindent\item 
Suppose further that 
\begin{align}\label{need1}
\lim_{u\downarrow0}\sup_{s\ge 0}\E \big[ \sup_{t\in[s,s+u]}\{Z(t)\} \big] <\infty, 
\end{align}
and there exists $\eta>0$ such that
\begin{align}
\label{nonsum1}\liminf\limits_{t,\tau\rightarrow\infty, \tau\le \eta t}\frac{A(t,t+\tau)}{\rho(\tau)} >0.
\end{align}
Then, 
\begin{equation}\label{eq:theta-pers}
- \log \P (\sup_{t \in [0,T]}\{Z(t)\}<0) = \Theta(a_\rho(T))\,.
\end{equation}
\end{enumerate}
\end{thm}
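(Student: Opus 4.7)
\emph{Plan for part (b).} The natural construction is a slow/fast Gaussian decomposition yielding an explicit event of probability at least $\exp(-C a_\rho(T))$ that implies $\sup Z<0$. Set $X:=T^{-1}\int_0^T Z(s)\,ds$; under (\ref{nonsum1}), $X$ is a centered Gaussian with $\Var(X)\asymp I_\rho(T)/T$. Define $\mu_t:=\Cov(Z(t),X)/\Var(X)$ and $\xi(t):=Z(t)-\mu_t X$, so that $X$ and the Gaussian process $\xi$ are independent. For any $m>0$,
\begin{equation*}
\P\bigl(\sup\nolimits_{[0,T]}Z<0\bigr)\;\ge\;\P\bigl(X<-m/\inf_t\mu_t\bigr)\cdot\P\bigl(\sup\nolimits_{[0,T]}\xi<m\bigr).
\end{equation*}
Take $m=C\sqrt{\log T}$ when $\alpha\in(0,1)$ and $m=C\sqrt{\log\log T}$ when $\alpha=1$. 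Condition (\ref{need1}) together with a unit-interval covering of $[0,T]$ gives $\E[\sup_{[0,T]}\xi]\lesssim m$, so the second factor is bounded below by a positive constant via Borell--TIS. The first factor is at least $\exp(-cm^2/\Var(X))$ by the Gaussian lower tail, and for these choices $m^2/\Var(X)\asymp a_\rho(T)$ (using $\log T\asymp\log I_\rho(T)$ when $\alpha<1$). Multiplying gives $\P\ge\exp(-Ca_\rho(T))$, i.e.\ $-\log\P\lesssim a_\rho(T)$.

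\emph{Plan for part (a).} The same decomposition $Z(t)=\mu_t X+\xi(t)$ is used in reverse. Now (\ref{nonsum2}) gives $\Var(X)\lesssim I_\rho(T)/T$, and the event $\{\sup Z<0\}$ forces $X<-S$, where $S:=\sup_t\xi(t)/\mu_t$. Hence
\begin{equation*}
\P\bigl(\sup\nolimits_{[0,T]}Z<0\bigr)\;\le\;\P(X<-y_0)\;+\;\P(S\le y_0)\qquad(y_0>0).
\end{equation*}
Taking $y_0\asymp\sqrt{\log I_\rho(T)}$ makes $y_0^2/\Var(X)\asymp a_\rho(T)$, so the Gaussian upper tail yields $\P(X<-y_0)\le\exp(-c\,a_\rho(T))$, matching the target rate.

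\emph{Main obstacle.} The heart of the argument is the one-sided small-ball bound $\P(S\le y_0)\le\exp(-c\,a_\rho(T))$. Because $y_0\asymp\sqrt{\log I_\rho(T)}$ sits below the typical scale $\sqrt{\log T}$ of $\sup\xi/\mu$ (by a bounded multiplicative factor when $\alpha<1$, or an unbounded one when $\alpha=1$), Borell--TIS alone only delivers a polynomial-in-$T$ estimate, which for $\alpha<1$ is much weaker than $\exp(-T^\alpha\log T)$. The proposed remedy is to partition $[0,T]$ into $n=T/L$ blocks of length $L\asymp I_\rho(T)$; by (\ref{nonsum2}) the between-block cross-correlations are dominated by $\rho(L)$, small enough that Slepian's inequality decouples the block suprema of $\xi/\mu$ into a nearly independent product. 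Within each block, the expected supremum of $\xi/\mu$ is $\asymp\sqrt{\log L}$, comparable to $y_0$, so Gaussian anti-concentration gives each event $\{\sup_{[jL,(j+1)L]}\xi/\mu\le y_0\}$ probability at most $1-\delta$ for some $\delta>0$. This yields $\P(S\le y_0)\le(1-\delta)^n(1+o(1))\le\exp(-cT/L)$, falling short of the target by a factor of $\log I_\rho(T)$. Recovering this extra logarithm---either by iterating the decomposition dyadically through the scales afforded by $\rho\in\mathcal{R}_\alpha$, or by sharpening the per-block anti-concentration so that $\delta=\delta(L)$ absorbs the missing $\log I_\rho(T)$---is the technical crux, and is precisely where the regular-variation hypothesis is used to ensure that the nested block statistics scale predictably and close up to the claimed $\Theta(a_\rho(T))$ rate.
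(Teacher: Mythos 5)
Your overall architecture (split $[0,T]$ into blocks of length comparable to $I_\rho(T)$, decouple via Slepian, multiply per-block estimates, and use a common low-frequency Gaussian coordinate) is the right shape and parallels the paper. But there are two genuine gaps, one of which you flag yourself and correctly identify as the crux.

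\emph{The missing anti-concentration estimate.} Your plan for part (a) lands on needing $\P(S\le y_0)\lesssim \exp(-c\,a_\rho(T))$ with $y_0\asymp\sqrt{\log I_\rho(T)}$, observes that plain Borell--TIS and a naive block decoupling only give $\exp(-cT/I_\rho(T))$ (missing the $\log I_\rho(T)$ factor), and leaves the recovery of that logarithm unresolved. The paper supplies precisely the missing ingredient as Lemma \ref{easy}: for each block of length $M\asymp I_\rho(T)$ that is far enough from the origin, the condition \eqref{nonsum2} lets one place $\Theta(M)$ equally spaced points whose pairwise correlations are all at most $\epsilon$ (by taking the spacing $\tau_0$ large). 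Dominating by $\sqrt{1-\epsilon}X_i+\sqrt{\epsilon}X_0$ and using a union-type bound gives, for $r=\sqrt{\delta\log M}$,
\[
\P\bigl(\sup_{[s,s+M]}Z<\sqrt{\delta\log M}\bigr)\;\le\;\P(X_0<-r\epsilon^{-1/2})+\P(X_1<3r)^{\Theta(M)}\;\lesssim\; M^{-\delta/(2\epsilon)}+e^{-\sqrt{M}},
\]
which is super-polynomial in $M$ since $\epsilon$ can be taken arbitrarily small. Thus each block contributes $\gtrsim \log I_\rho(T)$ to $-\log\P$, and the $T/I_\rho(T)$ blocks supply exactly the missing logarithm. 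Note this has nothing to do with the regular-variation structure of $\rho$ beyond the decay in \eqref{nonsum2}; it is a common-noise comparison argument, so your intuition that ``the regular-variation hypothesis'' is what closes the gap points in the wrong direction. Without a proof of this lemma (or an equivalent sharpening of $\delta=\delta(L)$), part (a) is open in your write-up.

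\emph{Boundary issue in part (b).} Your decomposition $Z=\mu_t X+\xi_t$ with $X=T^{-1}\int_0^T Z$ needs $\inf_t\mu_t$ bounded below, but \eqref{nonsum1} constrains $A(t,t+\tau)$ only for $t,\tau\to\infty$; for $t$ near $0$, $\Cov(Z(t),X)$ can be as small as $O(1/T)$, making $\mu_t\asymp 1/I_\rho(T)\to 0$, so $m/\inf_t\mu_t$ blows up and the claimed Gaussian tail bound no longer gives the target rate. The paper avoids this by first proving the lower bound only on $[rT,T]$ (where both \eqref{nonsum2} and \eqref{nonsum1} are uniformly effective) and then stitching back to $[0,T]$ via Slepian and the regular variation of $a_\rho$ (Step I). Your sketch should be restricted to $[rT,T]$ as well. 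Separately, for $\alpha=1$ the threshold should be $m\asymp\sqrt{\log I_\rho(T)}$, not $\sqrt{\log\log T}$: for slowly varying $I_\rho$ these need not agree (e.g.\ $\rho(t)=e^{\sqrt{\log t}}/(1+t)$ gives $\log I_\rho(T)\asymp\sqrt{\log T}$). Finally, where you condition on $X$ negative, the paper instead conditions on the block-boundary values $Z(s_{3\ell-1})$ lying in a thin negative band, and then must verify that the conditional covariance stays non-negative (Steps III--IV); this verification is non-trivial and is bypassed, not avoided, by your time-average decomposition, since you would still need the analogue of a lower bound on $\P(\sup\xi<m)$ via Borell--TIS, which requires showing the conditional process has small increments, exactly as in the paper.
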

Theorem \ref{gen2} is proved in Section \ref{proof-gen2}, 
where for the upper bound of part (a) it suffices to consider 
the persistence probabilities over $[rT,T]$ for suitably chosen 
$r\in (0,1)$. We can further split $[rT,T]$ into 
sub-intervals while leaving large enough gaps to ensure that the 
dependence between the restrictions of $Z(t)$ to the different 
sub-intervals, is weak enough for deducing an exponential decay 
of the overall persistence probability in terms of the number 
of such sub-intervals. The more delicate proof of the 
complementary lower bound of part (b) consists of four steps. 
We first rely on Slepian's lemma and the non-negativity of 
the correlation $A(\cdot,\cdot)$ of \eqref{eq:A-def} to 
show that if such lower bound holds for intervals 
$[rT',T']$ with $r \in (0,1)$ fixed and $T'$ 
large enough, then it must also extend to the interval $[0,T]$. 
To verify such a bound for $[rT,T]$, in the second step 
we split it to many sub-intervals, now employing a 
conditioning argument to control the height of the 
end-points of these sub-intervals, provided that the 
conditioned process has non-negative correlations. 
The third step establishes the latter crucial fact, 
thanks to certain properties of any such correlation 
function, the derivations of which are deferred 
to the last step of the proof.
\begin{remark}
For $\alpha \in (0,1)$, upon 
comparing \eqref{eq:arho-def} and \eqref{eq:slow_estimate<1} we see 
that $a_\rho(T) = \Theta(\rho(T)^{-1} \log T)$. So, in this case
our conclusion \eqref{eq:theta-pers} is that the 
persistence probability lower bound of \cite{NR}, 
namely the \abbr{rhs} of \eqref{eq:under-one}, is tight. 
In contrast, $I_\rho(T)$ is not $\Theta (T \rho(T))$ when 
$\alpha=1$, and in particular $\rho(t)=1/(1+t)$ yields
$a_\rho(T) = T (\log \log T)/(\log T)$, with neither the 
upper nor the lower persistence probability bound 
of \cite{NR} then tight.
\end{remark}
\begin{remark}\label{rem:al-corr}
The conclusion \eqref{eq:theta-pers} holds for any 
\emph{stationary} process $\{Z(\cdot)\}$ 
having non-negative auto-correlation $A(0,\tau)=\Theta(\rho(\tau))$ 
for some $\rho \in \cR_\alpha$, $\alpha \in (0,1]$, such that 
$I_\rho(\infty)=\infty$ and {$t \mapsto Z(t)$ has a.s. continuous sample path (which holds for example when
$|\log u|^\eta (1-A(0,u)) \to 0$ as $u \to 0$, for some $\eta>1$,
see \cite[(1.4.3)]{AT}).} Such stationary Gaussian processes of 
algebraically decaying, non-summable correlations  appear 
frequently in the physics literature (see for example  
\cite{GPS,MB1,MB2}, and the excellent survey in \cite{BMS}). 
An interesting open 
problem is to find in this context sufficient 
conditions for the existence of the limit  
\begin{equation}\label{eq:lim-pers}
b_\ast (A) := 
- \lim_{T \to \infty} \frac{1}{a_\rho(T)}  
\log \P (\sup_{t \in [0,T]}\{Z(t)\}<0) \,,
\end{equation}
possibly after replacing $a_\rho(T)$ of \eqref{eq:arho-def} 
by an equivalent function.
\end{remark}

For slowly varying, eventually decaying to zero, correlations (namely, as
in Theorem \ref{gen2}, but with $\rho \in \cR_0$), we next determine 
the rate of decay of persistence probabilities, up to a log factor.
\begin{ppn}\label{decay_slowly_varying}
Suppose in the setting of Theorem \ref{gen2} that 
conditions \eqref{nonsum2}, \eqref{need1} and \eqref{nonsum1} 
hold for some $\rho\in \mathcal{R}_0$ which is eventually 
non-increasing and
$$
\lim_{x\rightarrow\infty}\rho(x)=0 \,.
$$
Then, we have that
\begin{align}\label{eq:decay_slowly_varying} 
a_\rho(T) \lesssim  -\log \P (\sup_{t \in [0,T]}\{Z(t)\}<0) \lesssim a_\rho(T) \log T \,.
\end{align}
\end{ppn}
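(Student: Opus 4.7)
The statement comprises two one-sided bounds, to be proven by distinct arguments.

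For the \emph{lower bound} $a_\rho(T) \lesssim -\log \P(\sup_{[0,T]} Z < 0)$, my plan is to transplant the argument of Theorem~\ref{gen2}(a) outlined in the excerpt. That argument splits $[rT,T]$ (after a Slepian-type reduction) into $N$ sub-intervals of bounded length separated by gaps of width $g$, bounds cross-block correlations by $C\rho(g)$ via \eqref{nonsum2}, and, once $\rho(g)$ is small, invokes a Gaussian decorrelation step to conclude $-\log \P(\sup_{[rT,T]} Z<0) \gtrsim N$. The only place the assumption $\rho \in \mathcal{R}_\alpha$, $\alpha > 0$, entered there was to furnish a choice of $g = g(T) \to \infty$ with $\rho(g) \to 0$ and $N = T/g \asymp a_\rho(T)$. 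For $\rho \in \mathcal{R}_0$ with $\rho \to 0$, the choice $g := I_\rho(T)/\log I_\rho(T)$ still does both: since $I_\rho(T) \to \infty$, we have $g \to \infty$ and hence $\rho(g) \to 0$; while $N := T/g = T\log I_\rho(T)/I_\rho(T) = a_\rho(T)$ by construction. So the same argument gives $-\log \P \gtrsim a_\rho(T)$.

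For the \emph{upper bound} $-\log \P(\sup_{[0,T]} Z < 0) \lesssim a_\rho(T)\log T$, my plan is to exhibit an explicit high-probability event forcing $\sup_{[0,T]} Z < 0$, in the Newell-Rosenblatt spirit. Fix $N := \lceil a_\rho(T)\rceil$, $L := T/N$, $t_i := iL$ for $i = 0,\ldots,N$, and a threshold $h := C\sqrt{\log T}$ with $C$ sufficiently large. Consider
\[
E := \Big\{Z(t_i) < -h \;\;\forall i\Big\} \cap \Big\{\sup_{t\in[t_i,t_{i+1}]}\big(Z(t)-Z(t_i)\big) < h \;\;\forall i\Big\},
\]
on which $\sup_{[0,T]} Z < 0$. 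Non-negativity of the correlations puts the covariance matrix $\Sigma$ of $(Z(t_0),\ldots,Z(t_N))$ entrywise above the identity, so by Slepian's lemma $\P(Z(t_i)<-h\;\forall i) \ge \Phi(-h)^{N+1} \gtrsim \exp(-C'Nh^2)$, and $Nh^2 \asymp a_\rho(T)\log T$ by the choice of $N, h$. For the oscillation event, a chaining argument over the $\lceil L\rceil$ unit-length pieces of each $[t_i,t_{i+1}]$, based on \eqref{need1}, gives $\E\sup_{[t_i,t_{i+1}]}(Z(t)-Z(t_i)) \lesssim \sqrt{\log L} \le \sqrt{\log T}$, whence Borell-TIS yields the uniform tail $\P(\sup(Z(t)-Z(t_i))\ge h) \le e^{-ch^2}$.

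The \textbf{main obstacle} is combining the two estimates, since a naive bound $\P(E) \ge \P(E_1) - \P(E_2^c)$ is useless because $\P(E_2^c)$ dominates $\P(E_1)$. The remedy, in parallel with step~(3) of the sketch of Theorem~\ref{gen2}(b), is to work conditionally on $\cF_N := \sigma(Z(t_0),\ldots,Z(t_N))$: by Gaussianity, $Z(t) = \E[Z(t)\mid\cF_N] + W(t)$ with $W$ a centered Gaussian process independent of $\cF_N$ of variance at most $1$. The non-negativity of conditional correlations established in step~(3) of that proof, together with slow variation of $\rho$ which keeps $\rho(L)\asymp\rho(T)\to 0$ uniformly, should yield, via a kriging-weight analysis, $\sup_{t\in[0,T]}\E[Z(t)\mid\cF_N] \le -ch$ on $E_1$; and an independent Borell-TIS estimate on $W$, using the chaining bound above, gives $\sup_t W(t) < ch$ with probability bounded below by a constant. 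Combining, $\P(E) \gtrsim \P(E_1)\cdot\mathrm{const} \gtrsim \exp(-C''a_\rho(T)\log T)$, as required.
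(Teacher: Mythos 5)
Your \emph{lower bound} plan (transplant the argument of Theorem~\ref{gen2}(a) to $[rT,T]$) is the same route the paper takes; the paper explicitly notes that the bulk of that proof applies for $\rho\in\cR_0$ under the extra hypotheses. However, your description of the mechanism is wrong, and the specific parameters you choose would break it. In the paper's proof the sub-intervals and gaps all have length $M=\lambda I_\rho(T)\to\infty$ (not ``bounded length''), the threshold is $\sqrt{L}$ with $L=\delta\log I_\rho(T)$, and the final bound is $-\log\P\gtrsim n L$ with $n\asymp T/I_\rho(T)$; the $\log I_\rho(T)$ factor comes from the threshold $L$, not from having $\asymp a_\rho(T)$ blocks. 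If you instead take gap $g=I_\rho(T)/\log I_\rho(T)$ and $N=T/g\asymp a_\rho(T)$ blocks, the row sums of the cross-block correlation matrix are
$\sum_{\ell=1}^{N}\rho(\ell g)\asymp I_\rho(Ng)/g = I_\rho(T)/g = \log I_\rho(T)\to\infty$,
so the Gershgorin/positive-definiteness step that makes the Slepian comparison legitimate (the paper's \eqref{eq:xi-bd}) fails. You must keep $M=\lambda I_\rho(T)$; with that, the argument goes through for $\rho\in\cR_0$ essentially verbatim, and this is what the paper does.

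Your \emph{upper bound} takes a genuinely different route, but it has gaps that I do not see how to close. The paper's proof applies Step~II of Theorem~\ref{gen2}(b) on the intervals $[T_{i-1},T_i]$ with geometric scales $T_i=r^{1-i}T_1$, then telescopes via Slepian, and obtains the extra $\log T$ precisely because $\widehat{a}_\rho(T)=\sum_i a_\rho(T_i)\le m\,a_\rho(T_m)\lesssim a_\rho(T)\log T$ after showing $a_\rho$ is eventually non-decreasing (via a differential inequality, using that $\rho$ is eventually non-increasing and $\to 0$). Your plan instead works directly on $[0,T]$ with $N\asymp a_\rho(T)$ equally spaced conditioning points, but: (i) the grid spacing $L=T/a_\rho(T)=I_\rho(T)/\log I_\rho(T)$ again gives $\sum_\ell\rho(\ell L)\asymp\log I_\rho(T)\to\infty$, so the analogue of \eqref{mean_estimate}--\eqref{cov_estimate_endpoints} cannot hold, and the power-series representation of the kriging weights (which requires $\|\bDel\|<1$) is not available; (ii) the one-sided event $\{Z(t_i)<-h\}$ is likely too weak --- the paper's Step~II uses a two-sided window $\Gamma$ precisely to control the sign of every term $\langle\by(t),\bDel^{2k}({\bf I}-\bDel)\bz\rangle$ in the expansion of $m(t)$; with only a lower bound on the $Z(t_i)$'s you cannot rule out large positive contributions from the alternating terms; and (iii) the lower correlation bound \eqref{nonsum1} (entering \eqref{l00} and then \eqref{cov_estimate_blocks}--\eqref{cov_estimate_endpoints}) holds only for $\tau\le\eta t$, i.e.\ away from $t=0$, which is exactly why the paper restricts Step~II to $[rT,T]$ and then telescopes geometrically. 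Without a fix for all three, the conditional mean bound $m(t)\le -ch$ on $E_1$ does not follow.
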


\begin{remark} Recall that the spectral measure $\mu_A$ of a centered, 
stationary Gaussian process $\{Z(\cdot)\}$ is the unique non-negative measure 
such that 
\begin{align*}
A(0,\tau)=\int_\R e^{-i\lambda \tau}d\mu_A (\lambda)
\,\qquad 
\forall \tau \in \R 
\end{align*}
and in particular, the absolute integrability of $A(0,\cdot)$ implies 
the existence of uniformly bounded density of $\mu_A$. 
Following \cite{DV} treatment of discrete time, centered, stationary Gaussian 
sequences, \cite[Theorem 2.1]{BD} derives the 
Large Deviations Principle (\abbr{ldp}) at speed $T$ and
$C_b(\R)$-topology, for 
$L_T := T^{-1} \int_0^T \delta_{Z(t)} \, dt$,
provided $\mu_A$ has a vanishing at infinity, continuous density. 
This does not imply the \abbr{LDP} for $L_T(-\infty,0)$, so
when going beyond non-negative correlations, the limit in \eqref{eq:exp-coef} might
not exist.
Nevertheless, \cite{FF} provide in this setting sufficient conditions for truly 
exponential decay of the persistence probabilities. Specifically, \cite{FF} shows that
\begin{equation}\label{eq:exp-decay}
-\log \P(\sup_{t\in[0,T]}\{Z(t)\}<0)=\Theta(T)\,,
\end{equation}
if near the origin the corresponding spectral measure $\mu_A$ 
has a bounded away from zero and infinity density, and for some $\delta>0$ 
the integral $\int_\R|\lambda|^\delta \mu_A(d\lambda)$ is finite.
Our proof of Lemma \ref{clm3} shows that any centered, stationary, 
separable Gaussian process $Z(\cdot)$ with absolutely integrable $A(0,\cdot)$, 
has at least exponentially decaying persistence probabilities
(so neither bounded away from zero density 
near the origin nor having
$\int_{\R} |\lambda|^\delta \mu_A (d\lambda) < \infty$,
are required for such exponential decay). 
It further raises the natural question what is 
the precise necessary and sufficient condition 
for having at least exponential decay of persistence 
probabilities of such processes.
\end{remark}

%
%
%
%
%

As an application of Theorem \ref{gen2}, we sharpen 
some of the results of \cite{Sak} about asymptotic persistence probabilities for a certain family of
$\grad \phi$-interface models. Specifically, 
consider the $\R_+ \times \Z^d$-indexed 
centered Gaussian process $\{\phi_t({\bf x})\}$
given 
by the unique strong solution of the corresponding  
(Langevin) system of interacting diffusion processes:
\begin{align}\label{A1}
d\phi_t({\bf x})=\{-\phi_t({\bf x})+\sum_{{\bf y}
\neq {\bf x}}q({\bf y}-{\bf x})\phi_t({\bf y})\}dt+\sqrt{2}dB_t({\bf x}),\quad \phi_0({\bf x})=0.
\end{align}
Here $\{B_t({\bf x})\}_{{\bf x}\in \Z^d}$ is a collection of independent standard Brownian motions, and we make 
the following assumptions about $q:\Z^d\mapsto \R_+$.
\begin{ass}\label{ass-jump-rates} 
The function $q:\Z^d\mapsto \R_+$ 
satisfies the following four conditions:
\begin{enumerate}[(a)]
\item
$q({\bf x})=q(-{\bf x})$,

\item
There exists $R<\infty$ such that $q({\bf x})=0$ whenever $||{\bf x}||_2\ge R$,

\item
$\sum_{{\bf x}\neq {\bf 0}}q({\bf x})=1$,

\item
The additive group generated by 
$\{{\bf x}\in \Z^d:q({\bf x})>0\}$ is 
$\Z^d$.
\end{enumerate}
\end{ass}
Such $\grad\phi$ and other, closely related, models 
received much interest in mathematical physics and probability literature (c.f. 
\cite{Deu,FS,Gar,GOS,Hamm} and the references therein). 
It is not hard to verify that a standard 
approximation argument proves the existence 
of a unique strong solution of (\ref{A1}) 
(that is, a stochastic process 
$\phi_t({\bf x}) \in C([0,\infty),\cE')$ 
for $\cE'=\{{\bf x}: \sum_{i} (1+\|i\|)^{-2p} |x(i)|^2 
< \infty,$ for some $p \ge 1 \}$, adapted to the 
filtration 
$\sigma(B_s({\bf x}): {\bf x} \in \Z^d, s \le t)$
and satisfying \eqref{A1}).
Further, there exists a random walk representation for
the space-time correlations of \eqref{A1} 
(c.f. \cite{DD,Deu2}; see also the references 
therein for other interacting diffusion processes 
admitting a random walk representation 
for their correlations). From this random walk representation we have that the covariance of 
the centered Gaussian process 
$g_t:=\phi_t({\bf 0})$ is 
\begin{equation}\label{eq:gamma-def}
\Gamma^{(q)}(s,t):=\int_{|s-t|}^{s+t}\P(S^{(q)}_u={\bf 0})du\,.
\end{equation}
Here $\{S^{(q)}_u\}_{u\ge 0}$ denotes the continuous time 
random walk on $\Z^d$, starting at $S^{(q)}_0={\bf 0}$
which upon its arrival to any site ${\bf x} \in \Z^d$
waits for an independent, Exponential$(1)$ time, 
then moves with probability $q({\bf y - x})$ 
to ${\bf y} \in \Z^d \setminus \{{\bf x}\}$.
%
%
%
%
%
%
%
%
The correlation of the process $\{g_t\}$ is consequently of the form 
\begin{align}\label{A2}
C_\rho(s,t) =&\frac{I_\rho(s+t)-I_\rho(|s-t|)}{\sqrt{I_\rho(2s)I_\rho(2t)}}\,,
\end{align}
for $I_\rho(\cdot)$ of \eqref{eq:Irho-def}, 
where $\rho(u) := \P(S^{(q)}_u={\bf 0})$ is 
bounded, strictly positive and regularly varying (see the proof of Corollary \ref{sak_improved}).
More generally, replacing $\P(S^{(q)}_u={\bf 0})$ by some other
regularly varying function $\rho$, 
our next theorem provides asymptotic decay of persistence probabilities 
for any centered Gaussian process 
$\{Y_\rho(t)\}_{t> 0}$ having correlation 
$C(s,t):= \E[Y_\rho(t) Y_\rho(s)]$ of the form \eqref{A2} 
for some $\rho\in \cR_\gamma$.
To this end, for $\gamma > 1$ we may utilize 
the corresponding limiting correlation function 
\begin{equation}\label{eq:Arho-def}
\overline{C}_\rho(s,t) := \lim_{k \to \infty} C_\rho(s+k,t+k) 
= 1-\frac{I_\rho(|s-t|)}{I_\rho(\infty)} \,.
\end{equation}
For $\gamma \in [0,1)$ we shall instead consider  
the universal limiting correlation functions 
associated with the 
Lamperti transformation $t=e^u$ (see \cite{Lam}). 
That is, 
\begin{equation}\label{eq:Agam-def}
C^\star_\gamma(v,u):= \lim_{k \to \infty} C_\rho(e^{v+k},e^{u+k})
=\cosh(|u-v|/2)^{1-\gamma}-\sinh(|u-v|/2)^{1-\gamma}.
\end{equation}
The latter functions 
appear  
in the physics literature 
when studying persistence of Gaussian 
processes driven by linear stochastic
differential equations (see \cite{KKMCBS,MB2}).

\begin{thm}\label{all}
Suppose the process $\{Y_\rho(\cdot)\}$ has 
correlation function of the form \eqref{A2}
for some $\rho \in \cR_\gamma$ and let
$I_{\wt{\rho}}(\cdot)$ denote the primitive of 
$\wt{\rho}(s):=s \rho(s)$.
\begin{enumerate}[(a)]
\item
If $\gamma > 2$ or $\gamma=2$ and  
$I_{\wt{\rho}}(\infty)<\infty$, 
then 
\begin{align}\label{sum}
- \lim\limits_{T\rightarrow\infty}\frac{1}{T}\log \P(\sup_{t \in [1,T]}\{Y_\rho(t)\}<0)=
b(\overline{C}_\rho)
\in (0,\infty) \,,
\end{align}
provided $\rho(\cdot)$ is 
uniformly bounded away from zero on compacts. 

\item
If $\gamma\in [0,1)$, then 
\begin{align}\label{sum2}
- \lim\limits_{T\rightarrow\infty}\frac{1}{\log T}\log \P(\sup_{t \in [1,T]}\{Y_\rho(t)\}<0)=b(C^\star_\gamma) \in (0,\infty) \,.
\end{align}
\item
If $\gamma\in (1,2)$ or $\gamma=2$ and $I_{\wt{\rho}}(\infty)=\infty$, then 
\begin{align}
- \log\P(\sup_{t \in [1,T]}\{Y_\rho(t)\}<0) = \Theta(a_{\wt{\rho}}(T)) 
\label{nonsum}
\end{align}
\end{enumerate}
\end{thm}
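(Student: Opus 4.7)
Plan. The three parts correspond to different asymptotic regimes for $I_\rho$ and $C_\rho$, dictated by Karamata's theorem applied to $\rho\in\cR_\gamma$. My strategy is to reduce (a) and (b) to persistence of a stationary limiting process via asymptotic stationarity of the shifted (respectively Lamperti-transformed) process, and to reduce (c) to a direct application of Theorem \ref{gen2}.

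\emph{Part (a), summable regime ($\gamma>2$, or $\gamma=2$ with $I_{\wt\rho}(\infty)<\infty$).} Since $\gamma>1$, $I_\rho(\infty)<\infty$ and Karamata yields $I_\rho(\infty)-I_\rho(\tau)\sim\wt\rho(\tau)/(\gamma-1)$, so $\overline{C}_\rho(0,\tau)=1-I_\rho(\tau)/I_\rho(\infty)$ is non-negative and absolutely integrable precisely under $I_{\wt\rho}(\infty)<\infty$. Hence $b(\overline{C}_\rho)\in(0,\infty)$, with positivity by the integrability argument behind Lemma \ref{clm3} and finiteness by Lemma \ref{clm1}. From \eqref{A2} one reads $C_\rho(s+k,t+k)\to\overline{C}_\rho(s,t)$ uniformly on compacts as $k\to\infty$; applying Slepian's lemma to sandwich $Y_\rho(\cdot+k)$ on $[0,T]$ between stationary Gaussian processes with correlation $\overline{C}_\rho\pm\veta$ then transfers the exponential persistence rate of the limit to $Y_\rho$ on $[k,k+T]$. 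The initial piece on $[1,k]$ contributes at most $O(k)$ to $-\log\P$ and is absorbed after dividing by $T$; letting $\veta\downarrow 0$ gives \eqref{sum}.

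\emph{Part (b), slowly-decaying regime ($\gamma\in[0,1)$).} Apply the Lamperti change of variables $t=e^u$ and set $X(u):=Y_\rho(e^u)$. By \eqref{eq:Agam-def}, $\E[X(v+k)X(u+k)]\to C^\star_\gamma(v,u)$ as $k\to\infty$, a stationary correlation in $u-v$. Using $\cosh(r)/\sinh(r)=1+2e^{-2r}(1-e^{-2r})^{-1}$, one expands $C^\star_\gamma(0,u)=\Theta(e^{-(1+\gamma)u/2})$ for large $u$, which is summable, so $b(C^\star_\gamma)\in(0,\infty)$. Since $\sup_{t\in[1,T]}Y_\rho(t)=\sup_{u\in[0,\log T]}X(u)$, the same asymptotic-stationarity sandwich as in part (a), now in the $u$-variable over an interval of length $\log T$, yields \eqref{sum2}.

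\emph{Part (c), non-summable regime ($\gamma\in(1,2)$ or $\gamma=2$ with $I_{\wt\rho}(\infty)=\infty$).} Apply Theorem \ref{gen2}(b) to $Y_\rho$ with $\wt\rho(\tau)=\tau\rho(\tau)\in\cR_{\gamma-1}$ in place of $\rho$; note $\gamma-1\in(0,1]$ and $I_{\wt\rho}(\infty)=\infty$. Using $I_\rho(\infty)<\infty$ and Karamata, a direct computation from \eqref{A2} gives
\[
\lim_{t,\tau\to\infty,\ \tau\le\eta t}\frac{C_\rho(t,t+\tau)}{\wt\rho(\tau)}\;=\;\frac{1}{(\gamma-1)I_\rho(\infty)}\in(0,\infty),
\]
verifying both \eqref{nonsum2} and \eqref{nonsum1}. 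Condition \eqref{need1} follows from standard Gaussian chaining using the incremental variance $2(1-C_\rho(s,s+u))$. Theorem \ref{gen2}(b) then yields \eqref{nonsum}.

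The main obstacle is making the Slepian sandwich in (a) and (b) rigorous uniformly in $T$: one must handle the transient interval $[1,k]$ on which correlations are far from their asymptotic form, and verify that a small additive perturbation of the limiting correlation gives a bona fide Gaussian correlation function whose persistence rate converges to $b(\overline{C}_\rho)$ (resp.\ $b(C^\star_\gamma)$) as $\veta\downarrow 0$. Once the Karamata identification of $\wt\rho$ is in hand, part (c) is essentially mechanical.
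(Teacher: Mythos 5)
Your treatment of part (c) is essentially the paper's argument: apply Theorem \ref{gen2} with the non-integrable $\wt\rho\in\cR_{\gamma-1}$, verify \eqref{nonsum2}, \eqref{nonsum1} via Karamata, and \eqref{need1} via the increments of $C_\rho$. (One small inaccuracy: as $\tau/t$ ranges over $(0,\eta]$ the ratio $C_\rho(t,t+\tau)/\wt\rho(\tau)$ does not converge to a single constant; one gets a $\limsup$ of $1/((\gamma-1)I_\rho(2))$ and a strictly positive $\liminf$, which is exactly what \eqref{nonsum2}--\eqref{nonsum1} require, so the conclusion stands.) Your Karamata computations for $\overline{C}_\rho$ in part (a), and the Lamperti reduction in part (b), also match the paper's route.

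However, there is a genuine gap in parts (a) and (b): the proposed ``Slepian sandwich between stationary Gaussian processes with correlation $\overline{C}_\rho\pm\veta$'' does not work as stated. First, $\overline{C}_\rho(s,t)+\veta$ has diagonal $1+\veta$ and $\overline{C}_\rho(s,t)-\veta$ is typically not positive semidefinite, so neither is the covariance kernel of a Gaussian process; Slepian requires equal variances and a genuine comparison kernel. Second, and more fundamentally, even if you produced valid kernels $C^{\pm}$ bracketing the shifted correlations $C_\rho(s+k,t+k)$ and converging to $\overline{C}_\rho$, you would still need to know that $b(C^{\pm})\to b(\overline{C}_\rho)$; this continuity of the persistence exponent in the correlation (or equivalently in the barrier level) is a nontrivial theorem, not a consequence of the sandwich itself. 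The harder direction is the lower bound on the persistence probability, for which a pointwise Slepian domination by $\overline{C}_\rho$ is of no help. The paper instead routes both parts through Lemma \ref{clm3}, which (i) establishes tightness and weak convergence of the shifted (resp. Lamperti-transformed) process in $\cC[0,M]$, giving \eqref{eq:dm-117}; (ii) invokes the continuity of $\varepsilon\mapsto b(A_\infty;\varepsilon)$ from \cite[Theorem 3.1(iii)]{LS2} to get \eqref{eq:dm-116}; and (iii) reruns the interval-decomposition/Borell--TIS machinery of \cite[Theorem 1.6]{DM}, with the tail condition relaxed from \eqref{eq:dm-115} to the summability condition \eqref{eq:weak-115} so that the borderline $\gamma=2$ case is covered. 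Your outline skips all of this; the transient piece on $[1,k]$ is indeed benign (as you note), but the $\pm\veta$ mechanism by which you propose to pass from the shifted process to the stationary limit is the step that needs to be replaced.
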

\begin{remark}
Note that for $\gamma \in [0,1)$ we 
get the same persistence power exponent $b(C^\star_\gamma)$ 
for all $\rho \in \cR_\gamma$ (which  
is not the case when $\gamma >2$).
%
%
\end{remark}

We have the following immediate application of 
Theorem \ref{all} for
$\rho^{(q)}(u):=\P(S^{(q)}_u={\bf 0})$.
\begin{cor}\label{sak_improved}
Fixing $d \in \N$ and $q:\Z^d \mapsto \R_+$ satisfying 
Assumption \ref{ass-jump-rates}, let $g_t=\phi_t({\bf 0})$ 
for $\phi_t({\bf x})$ 
which is the unique strong solution of \eqref{A1}.
\begin{enumerate}[(a)]
\item
If $d=1$ then 
\begin{align}\label{d_eql_1}
-\frac{1}{\log T}\log \P(\sup_{t\in [1,T]}\{g_t\}<0)=b(C^\star_{1/2})\in (0,\infty).
\end{align}

\item
If $d=3$ then 
\begin{align}\label{d_eql_3}
-\log\P(\sup_{t\in [1,T]}\{g_t\}<0)=\Theta(\sqrt{T}\log T).
\end{align}

\item
If $d=4$ then
\begin{align}\label{d_eql_4}
-\log \P(\sup_{t \in [1,T]} \{g_t\} <0)=\Theta\Big(\frac{T\log \log T}{\log T}\Big).
\end{align}

\item
If $d\ge 5$ then 
$\rho^{(q)}(u)=\P(S^{(q)}_u={\bf 0})
\in \mathcal{R}_{d/2}$ and
\begin{align}\label{d_large}
-\lim_{T\rightarrow\infty}\frac{1}{T}\log\P(\sup_{t\in [1,T]}
\{g_t\}<0)=b(\overline{C}_{\rho^{(q)}})\in(0,\infty) \,.
\end{align}
 
\item For jump rates $q_d:\Z^d \mapsto \R_+$ satisfying Assumption \ref{ass-jump-rates}
and any $k \ge 0$, let $G^{(q_d)}_k$ denote the expected occupation time of ${\bf 0}$
during $\{k,k+1,\ldots\}$ by a discrete time random walk of transition probabilities $q_d({\bf y}-{\bf x})$ that starts at $\{\bf 0\}$. Suppose the 
Green functions $G^{(q_d)}_0 \to 1$ as $d \to \infty$ and 
$k^2 G^{(q_d)}_k$ is uniformly bounded over $k \ge 1$ and $d \ge d_0$. Then, 
\begin{align}\label{truly_infty_dim}
\lim_{d\rightarrow\infty} b\big(\overline{C}_{\rho^{(q_d)}}\big) = 1\,.
\end{align}
\end{enumerate}
\end{cor}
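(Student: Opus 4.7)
My plan is to derive parts (a)--(d) as direct applications of Theorem \ref{all} to $g_t=\phi_t(\mathbf 0)$, and to treat part (e) by identifying the limiting correlation kernel and passing the limit through $b(\cdot)$.

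I first check the hypotheses of Theorem \ref{all}. From the random walk representation \eqref{eq:gamma-def}, $g_t$ has covariance $\Gamma^{(q)}(s,t)=\int_{|s-t|}^{s+t}\rho^{(q)}(u)\,du$ with $\rho^{(q)}(u)=\P(S^{(q)}_u=\mathbf 0)$, so its correlation takes the form \eqref{A2} with $\rho=\rho^{(q)}$. Writing $S^{(q)}_u=Y^{(q)}_{N(u)}$ for the discrete walk $Y^{(q)}$ subordinated to a unit-rate Poisson process $N$, the local central limit theorem for $Y^{(q)}$ (valid by Assumption \ref{ass-jump-rates}) gives $p^{(q)}_n(\mathbf 0,\mathbf 0)=\Theta(n^{-d/2})$, whence $\rho^{(q)}\in\mathcal R_{d/2}$; continuity and uniform positivity of $\rho^{(q)}$ on compacts are immediate from the Poisson formula. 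Setting $\gamma=d/2$, parts (a)--(d) then match clauses (b), (c), (c), (a) of Theorem \ref{all}: when $d=1$, $\gamma=1/2\in[0,1)$; when $d=3$, $\gamma=3/2\in(1,2)$; when $d=4$, $\gamma=2$ and $\wt\rho(s)=s\rho^{(q)}(s)\sim c/s$ gives $I_{\wt\rho}(\infty)=\infty$; and when $d\ge 5$, $\gamma\ge 5/2>2$ with $I_{\wt\rho}(\infty)<\infty$. The only remaining computation is $a_{\wt\rho}(T)=T\log I_{\wt\rho}(T)/I_{\wt\rho}(T)$: for $d=3$, $\wt\rho(s)\sim cs^{-1/2}$ gives $I_{\wt\rho}(T)=\Theta(\sqrt T)$ and $a_{\wt\rho}(T)=\Theta(\sqrt T\log T)$, matching \eqref{d_eql_3}; for $d=4$, $\wt\rho(s)\sim c/s$ gives $I_{\wt\rho}(T)=\Theta(\log T)$ and $a_{\wt\rho}(T)=\Theta(T\log\log T/\log T)$, matching \eqref{d_eql_4}.

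For part (e), I first identify the limit of the correlation. By Poisson subordination,
\[
\rho^{(q_d)}(u)=e^{-u}+\sum_{j\ge 1}e^{-u}\frac{u^j}{j!}\,p^{(q_d)}_j(\mathbf 0,\mathbf 0),
\]
so $I_{\rho^{(q_d)}}(\infty)=G^{(q_d)}_0$ and $I_{\rho^{(q_d)}}(u)=(1-e^{-u})+J_d(u)$ with $\sup_{u\ge 0}|J_d(u)|\le G^{(q_d)}_0-1$. Substituting into \eqref{eq:Arho-def} and using $G^{(q_d)}_0\to 1$ yields uniform convergence
\[
\sup_{s,t\ge 0}\bigl|\overline C_{\rho^{(q_d)}}(s,t)-e^{-|s-t|}\bigr|\longrightarrow 0.
\]
The limiting kernel $\overline C^{OU}(s,t):=e^{-|s-t|}$ is the stationary Ornstein--Uhlenbeck correlation, and via the Lamperti substitution $X_t=e^{-t}B_{e^{2t}}$ one has $\{\sup_{t\in[0,T]}X_t<0\}=\{\sup_{s\in[1,e^{2T}]}B_s<0\}$; integrating the reflection-principle identity $2\Phi(a/\sqrt{S-1})-1$ against the stationary density $\phi(a)$ gives $\P(\sup_{s\in[1,S]}B_s<0)\sim(\pi\sqrt S)^{-1}$, whence $b(\overline C^{OU})=1$.

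The main obstacle is promoting uniform convergence of covariances to $b(\overline C_{\rho^{(q_d)}})\to 1$. For the upper bound $\limsup_d b(\overline C_{\rho^{(q_d)}})\le 1$ I would use positive association of Gaussian vectors with non-negative correlations (so that $f_d(T):=-\log\P(\sup_{[0,T]}\bar Y^{(d)}_t<0)$ is sub-additive, and hence $b(\overline C_{\rho^{(q_d)}})\le L^{-1}f_d(L)$ for every $L<\infty$) combined with weak convergence of the stationary process $\bar Y^{(d)}$ to $X^{OU}$ in $C([0,L])$ (which follows from kernel convergence plus a standard Gaussian tightness estimate) and then letting $L\to\infty$. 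The matching lower bound $\liminf_d b(\overline C_{\rho^{(q_d)}})\ge 1$ is the delicate step and the main obstacle, since the tail $\overline C_{\rho^{(q_d)}}(s,t)=\Theta(|s-t|^{1-d/2})$ is only polynomial and eventually exceeds any $e^{-(1-\varepsilon)|s-t|}$, precluding a global Slepian sandwich by a slower OU. I would address it by a scale-separated argument: on $|s-t|$ below a $d$-dependent crossover the OU Slepian comparison applies and yields exponential decay of $\P(\sup<0)$ on blocks, while the long-range contribution is rendered summable by the uniform tail hypothesis $k^2G^{(q_d)}_k\le C$ and absorbed into an $o(T)$ additive error.
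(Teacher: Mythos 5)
Parts (a)--(d) are correct and match the paper's route exactly: the local \abbr{clt} gives $\rho^{(q)}\in\cR_{d/2}$, and the four cases follow from clauses (b), (c), (c), (a) of Theorem~\ref{all}, with the computations of $I_{\wt\rho}(T)$ for $d=3,4$ matching \eqref{d_eql_3}--\eqref{d_eql_4}.

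For part (e) there is a genuine gap. You correctly identify the limiting kernel $e^{-|s-t|}$ (using $I_{\rho^{(q_d)}}(\infty)-I_{\rho^{(q_d)}}(\tau)=\E[G^{(q_d)}_{N_\tau}]$ and $G^{(q_d)}_0\to 1$), and your Lamperti derivation of $b(e^{-|\tau|})=1$ is valid. The upper bound $\limsup_d b\le 1$ is also fine (indeed it is even easier than you propose: since $\overline C_{\rho^{(q_d)}}(0,\tau)\ge e^{-\tau}$ pointwise, a single Slepian comparison already gives $b(\overline C_{\rho^{(q_d)}})\le 1$ for every $d$). But for the lower bound $\liminf_d b\ge 1$ your proposal is only a heuristic. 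The step ``on $|s-t|$ below a $d$-dependent crossover the OU Slepian comparison applies and yields exponential decay of $\P(\sup<0)$ on blocks'' is not coherent as stated: $\overline C_{\rho^{(q_d)}}(0,\tau)\ge e^{-\tau}$ is a comparison in the \emph{wrong} direction for upper-bounding a block persistence probability, and the complementary control $\overline C_{\rho^{(q_d)}}(0,\tau)\le 1-\tfrac{1}{G^{(q_d)}_0}\P(N_\tau\ge 1)$, which you would need here, is never invoked. More importantly, the phrase ``the long-range contribution is rendered summable\ldots and absorbed into an $o(T)$ additive error'' is precisely the non-trivial part: turning ``small summable cross-block correlations'' into a multiplicative near-independence of block events requires the auxiliary Gaussian comparison argument (Gershgorin bound on the cross-block matrix, then a Slepian comparison against a block-structured surrogate as in \cite[(2.3)--(2.6)]{DM}). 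This is exactly what the paper packages into Lemma~\ref{clm3}, and the proof of Corollary~\ref{sak_improved}(e) is then just a verification of its hypotheses -- the uniform convergence of $\overline C_{\rho^{(q_d)}}$ to $e^{-\tau}$, the continuity modulus \eqref{eq:dm-123} via the lower bound in \eqref{eq:bd-Gnq}, and the tail bound \eqref{eq:weak-115} from $G^{(q_d)}_{\tau/2}\le \kappa\tau^{-2}$. Without Lemma~\ref{clm3} (or re-proving it), the lower bound remains unproved.
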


\begin{proof}
In view of Assumption \ref{ass-jump-rates},
the convergence $u^{d/2} \P(S^{(q)}_u={\bf 0}) \to c_q$ 
for some finite constant $c_q>0$ readily follows 
from the local \abbr{clt} for $\{S^{(q)}_u\}$, 
as in \cite[Theorem 2.1.3]{LL}. Hence,
comparing \eqref{eq:gamma-def} with \eqref{A2}, 
parts (a)-(d) of the corollary are an immediate 
application of Theorem \ref{all} for
$\rho^{(q)}(u) \in \mathcal{R}_{d/2}$
(indeed, part (a) of Theorem \ref{all} 
takes care of $d \ge 5$, 
part (b) handles $d=1$, while part (c) deals 
with both $d=3$ for which $I_{\wt{\rho}}(T) = \Theta(\sqrt{T})$ 
and $d=4$ for which $I_{\wt{\rho}}(T)=\Theta(\log T)$).
Part (e) is an application \cite[Theorem 1.6]{DM}, the details of 
which are provided in Section \ref{sec-3}.
\end{proof}

\begin{remark}
Corollary \ref{sak_improved} gives the exact order 
of decay for any $d\neq 2$, as well as existence of 
a limiting persistence exponent for $d=1$ and 
$d\ge 5$. In doing so it improves upon the earlier 
results of \cite{Sak}
(where the decay rate is determined for 
$d=1,d\ge 5$ without the existence of a limit, 
and decay rate upper and lower bounds within 
a $\log T$ factor are given for $d=2,3,4$). 
Recall that $\P(S^{(q)}_u=0) = \Theta(1/u)$ when $d=2$, 
hence the process $\{g_t/\sqrt{\E g_t^2}\}$ 
then has auto-correlation $A(s,t)=\Theta(1/\log|t-s|)$ 
for $1\ll |s-t| = \Theta(t)$. This corresponds
to $\alpha=0$, a case for which Theorem \ref{gen2} does not apply,
but Proposition \ref{decay_slowly_varying} predicts that 
$$
(\log T)^2\lesssim -\log \P(\sup_{t \in [1,T]} \{g_t\} <0)\lesssim(\log T)^3,
$$
as indeed proved in \cite{Sak}.
\end{remark}

\begin{remark}\label{rem:fbm}
As another application of Theorem \ref{gen2}, we determine the exact rate of persistence decay for stationary fractional Brownian motion of order $H\in (1/2,1)$ defined by the stochastic integral
$$
Y_H(t):=\int_{-\infty}^{t}e^{-(t-s)}dB_H(s),
$$
where $B_H(.)$ is two sided fractional Brownian motion of order $H\in (1/2,1)$. We 
refer to \cite{Unt} for a definition of stochastic integration with respect to fBM with Hurst index $H>\frac{1}{2}$.
Using \cite[(1.1)]{Unt} we find that the stationary correlation function of $Y_H(\cdot)$ is
$$
\Lambda_H(0,\tau)=e^{-\tau}+\frac{1}{\E [Y_H(0)^2]}\int_0^{\tau} e^{-(\tau-s)}R(s)ds,
$$
where 
$$
R(s):=H(2H-1)\int_0^\infty e^{-v}(v+s)^{2H-2}dv
$$
is asymptotically of order $H(2H-1)s^{2H-2}$ for $s$ large. Consequently we have that
$$
\limsup_{\tau\rightarrow\infty}\frac{\Lambda_H(t,t+\tau)}{\tau^{2H-2}}=
\frac{H(2H-1)}{\E [Y_H(0)^2]},
$$
with conditions \eqref{nonsum2} and \eqref{nonsum1} satisfied for the 
regularly varying $\rho(\tau)=\min(1,\tau^{2H-2})$. 
As for \eqref{need1}, recall that $\Lambda_H (0,\tau) {\ge 
\Lambda_{1/2}(0,\tau)} = e^{-\tau}$, hence by Slepian's lemma,
$$\E [\sup_{t\in [s,s+u]} \{Y_H(t)\} ] \le \E [\sup_{t\in [s,s+u]} \{Y_{1/2}(t)\}]
$$
for the stationary \abbr{OU} process $Y_{1/2}(\cdot)$, which 
satisfies \eqref{need1}. Consequently, so does $Y_H(\cdot)$ and 
from Theorem \ref{gen2} we conclude that 
$$
-\log \P(\sup_{[0,T]}Y_H(t)<0)=\Theta(T^{2-2H}\log T) \,.
$$
\end{remark}

Section \ref{proof-gen2} is devoted to the
proof of Theorem \ref{gen2}, with
Theorem \ref{gen2} applied in Section \ref{sec-3}
to yield part (c) of Theorem \ref{all} (and  
\cite[Theorem 1.6]{DM} utilized for deducing the 
complementary parts (a) and (b) of Theorem \ref{all}, as well as
part (e) of Corollary \ref{sak_improved}).

\noindent
\vskip 10pt

\noindent
{\bf Acknowledgment} 
This research is the outgrowth of discussions with
H. Sakagawa during a research visit of A. D.
that was funded by T. Funaki from Tokyo University. 
We are indebted to H. Sakagawa for sharing 
with us a preprint of \cite{Sak}, to J. Ding for an 
alternative proof of Theorem \ref{gen2}(b) and 
to O. Zeitouni for helpful discussions. We thank G. Schehr for bringing the references \cite{BMS,GPS} to our notice and the referees 
whose suggestions much improved this article.

\section{Proof of Theorem \ref{gen2}}\label{proof-gen2}

We first collect a few standard, well known results about 
Gaussian processes, that will be used throughout this paper.

\subsection{Preliminaries on Gaussian processes}

A key tool in our analysis is the following comparison theorem, 
known in literature as Slepian's lemma (see \cite[Theorem 2.2.1]{AT}).
\begin{thm}[Slepian's Lemma]\label{thm:slep}$~$\\
Suppose centered Gaussian processes $\{X_t\}_{t\in I}$ and $\{Y_t\}_{t\in I}$ 
are almost surely bounded on $I$. If  
$$\E X_t^2=\E Y_t^2, \quad \forall t\in I,
\qquad \E X_t X_s\le \E Y_t Y_s,\quad \forall s,t\in I \,,
$$
then for any $u\in \R$ one has
$$\P(\sup_{t\in I}X_t<u)\le \P(\sup_{t\in I}Y_t<u).$$
\end{thm}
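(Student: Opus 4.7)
The plan is the classical Gaussian interpolation argument. First I would reduce to a finite index set: a.s.\ boundedness of both processes on $I$ allows one to work on a countable separating subset $I_0 \subset I$, and by monotone convergence along finite sets $F_1 \subset F_2 \subset \cdots \uparrow I_0$ it suffices to prove, for every finite $F = \{t_1,\ldots,t_n\} \subset I$, the inequality
\[
\P\bigl(\max_{1 \le i \le n} X_{t_i} < u\bigr) \;\le\; \P\bigl(\max_{1 \le i \le n} Y_{t_i} < u\bigr).
\]
Write $\Sigma_{ij} := \E X_{t_i} X_{t_j}$ and $\Gamma_{ij} := \E Y_{t_i} Y_{t_j}$. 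The hypothesis gives $\Sigma_{ii} = \Gamma_{ii}$ and $\Delta := \Gamma - \Sigma$ has $\Delta_{ii} = 0$, $\Delta_{ij} \ge 0$. For $\theta \in [0,1]$, let $Z(\theta) \in \R^n$ be centered Gaussian with covariance $C(\theta) := (1-\theta)\Sigma + \theta \Gamma$, which is positive semidefinite as a convex combination of PSD matrices.

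Next, I would pick a smooth, nonincreasing $h \colon \R \to [0,1]$ with $h' \le 0$ and $h \downarrow \mathbf{1}_{(-\infty, u)}$ along a sequence, and set
\[
F_h(\theta) \;:=\; \E\Big[\prod_{i=1}^n h\bigl(Z_i(\theta)\bigr)\Big].
\]
Differentiating in $\theta$ via the standard covariance-derivative identity $\partial_{C_{ij}} \E[f(Z)] = \E[\partial_{z_i} \partial_{z_j} f(Z)]$ for $i \ne j$ (and $\tfrac12 \E[\partial_{z_i}^2 f(Z)]$ for $i=j$), combined with $\dot C(\theta) = \Delta$ and $\Delta_{ii} = 0$, yields
\[
F_h'(\theta) \;=\; \sum_{i<j} \Delta_{ij}\, \E\Big[h'\bigl(Z_i(\theta)\bigr)\, h'\bigl(Z_j(\theta)\bigr) \prod_{k \ne i,j} h\bigl(Z_k(\theta)\bigr)\Big].
\]
Every factor in each summand is nonnegative: $\Delta_{ij} \ge 0$ by the hypothesis on correlations, $h' h' \ge 0$ because $h' \le 0$, and $h \ge 0$. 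Hence $F_h'(\theta) \ge 0$, so $F_h(1) \ge F_h(0)$. Letting $h \downarrow \mathbf{1}_{(-\infty, u)}$ and applying monotone convergence gives the required inequality on $F$, and taking $F \uparrow I_0$ concludes the proof.

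The main obstacle is justifying the covariance-derivative identity and the interchange of $d/d\theta$ with the expectation. I would handle this by first regularizing to $C(\theta) + \varepsilon I$ so that the Gaussian density $\phi_{C(\theta) + \varepsilon I}$ is strictly positive and jointly smooth in $(z,\theta)$, differentiating the explicit density using $\partial_{C_{ij}} \phi_C = \partial_{z_i} \partial_{z_j} \phi_C$ (for $i \ne j$), and integrating by parts twice against the smooth bounded $\prod_k h(z_k)$; the interchange is routine by dominated convergence thanks to the boundedness of $h$ and $h'$. One then lets $\varepsilon \downarrow 0$ using continuity of $\theta \mapsto F_h(\theta)$ in the covariance. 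The separability reduction and the pointwise approximation $h \downarrow \mathbf{1}_{(-\infty,u)}$ are standard monotone convergence steps that I would not belabor.
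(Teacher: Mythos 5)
Your proof is correct and is the standard Gaussian interpolation argument for Slepian's Lemma; the paper itself does not prove this statement but simply cites it (to Adler and Taylor, Theorem 2.2.1), and the argument given there is essentially the one you present. One small technical remark: a decreasing sequence of continuous functions cannot converge pointwise to the lower semicontinuous $\mathbf{1}_{(-\infty,u)}$, so you should either take an \emph{increasing} sequence of smooth nonincreasing $h_n \uparrow \mathbf{1}_{(-\infty,u)}$ (e.g.\ $h_n \equiv 1$ on $(-\infty,u-1/n]$, $\equiv 0$ on $[u,\infty)$), or else prove the inequality with $\le u$ via $h_n \downarrow \mathbf{1}_{(-\infty,u]}$ and then pass to strict inequality by sending $u' \uparrow u$; either patch is routine and does not affect the heart of the argument, namely the sign computation for $F_h'(\theta)$, which you have right.
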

Combining Slepian's lemma and sub-additivity, one has the following 
immediate corollary.
\begin{cor}
If $\{X_t\}_{t\ge 0}$ is a centered, stationary Gaussian process of non-negative correlation function, such that $\sup_{t\in [0,T]} X_t$ is almost surely 
finite for any $T < \infty$, then the limit
$$-\lim_{T\rightarrow\infty}\frac{1}{T}\log \P(\sup_{t\in [0,T]}X_t<0)$$
exists in $[0,\infty]$.
\end{cor}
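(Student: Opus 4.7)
The plan is to set $f(T) := -\log \P(\sup_{t\in[0,T]} X_t < 0) \in [0,\infty]$, establish the subadditivity bound $f(T+S) \le f(T) + f(S)$ for all $S,T > 0$, and then invoke Fekete's lemma to deduce existence of $\lim_{T\to\infty} f(T)/T = \inf_{T>0} f(T)/T \in [0,\infty]$.

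For the subadditivity, fix $S, T > 0$ and construct an auxiliary centered Gaussian process $\{Y_t\}_{t\in[0,T+S]}$ by concatenating two independent copies $X^1, X^2$ of $X$: set $Y_t := X^1_t$ for $t \in [0,T]$ and $Y_t := X^2_{t-T}$ for $t \in (T,T+S]$. Within each block the covariance of $Y$ coincides with that of $X$ (by stationarity of $X$ for the second block), while across the two blocks $\E Y_s Y_t = 0 \le \E X_s X_t$ by the non-negative correlation hypothesis. In particular $\E Y_t^2 = \E X_t^2$ for every $t$ and $\E Y_s Y_t \le \E X_s X_t$ for every $s, t$, so Slepian's lemma (Theorem \ref{thm:slep}) applied with $u = 0$ gives
$$\P\bigl(\sup_{t\in[0,T+S]} X_t < 0\bigr) \ge \P\bigl(\sup_{t\in[0,T+S]} Y_t < 0\bigr) = \P\bigl(\sup_{t\in[0,T]} X_t < 0\bigr) \, \P\bigl(\sup_{t\in[0,S]} X_t < 0\bigr),$$
where the factorization uses independence of $X^1$ and $X^2$ together with stationarity of $X$. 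Taking $-\log$ yields $f(T+S) \le f(T) + f(S)$. The assumed a.s.\ finiteness of $\sup_{t\in[0,T]} X_t$ ensures that both $X$ and $Y$ are a.s.\ bounded on $[0,T+S]$, which is the standing hypothesis of Theorem \ref{thm:slep}.

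To extract the limit from subadditivity, apply Fekete's lemma to the integer sequence $\{f(n)\}$: since $f$ is non-negative and subadditive, $\lim_n f(n)/n = \inf_n f(n)/n =: L \in [0,\infty]$. To transfer this to continuous $T \to \infty$, observe that $f$ is non-decreasing (enlarging the interval can only shrink the event $\{\sup < 0\}$), whence $f(n) \le f(T) \le f(n+1)$ for $n \le T < n+1$, so that $f(n)/(n+1) \le f(T)/T \le f(n+1)/n$ and both bounds converge to $L$. The only mild subtlety worth flagging is the ambiguity in defining $Y$ at the single point $t = T$: setting $Y_T := X^1_T$ makes $Y$ a bona fide centered Gaussian process on $[0,T+S]$, and separability ensures that $\sup_{t\in[0,T+S]} Y_t$ coincides almost surely with $\max\bigl(\sup_{t\in[0,T]} X^1_t, \, \sup_{t\in[0,S]} X^2_t\bigr)$, so the factorization above is legitimate.
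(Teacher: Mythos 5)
Your proof is correct and is exactly the argument the paper has in mind: the paper states this corollary with only the remark ``Combining Slepian's lemma and sub-additivity,'' and your write-up supplies the details of that route (comparison with the block-independent auxiliary process via Slepian, supermultiplicativity of the persistence probability, Fekete's lemma, and the monotonicity argument to pass from integer to real $T$). The small points you flagged—the behavior at the seam $t=T$, separability, and using symmetry of the centered Gaussian to get a.s.\ two-sided boundedness from a.s.\ finiteness of the sup—are handled correctly.
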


The Sudakov-Fernique inequality (see \cite[Theorem 2.2.3]{AT}), is 
another comparison tool we use.
\begin{thm}[Sudakov-Fernique]\label{thm:sud}$~$\\
Suppose centered Gaussian processes $\{X_t\}_{t\in I}$ and $\{Y_t\}_{t\in I}$ 
are almost surely bounded on $I$. If 
$$
\E (X_t-X_s)^2\le \E (Y_t-Y_s)^2,\qquad \forall s,t\in I,
$$
then one has
$$\E[\sup_{t\in I}X_t]\le \E[\sup_{t\in I}Y_t]\,.$$
\end{thm}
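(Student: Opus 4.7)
The plan is to combine Gaussian interpolation with a smooth surrogate for the maximum, a standard approach. First, I reduce to a finite index set. Almost sure boundedness of a Gaussian process implicitly requires working with a separable version, so one may replace $I$ by a countable dense subset $I_0$ without changing the suprema. By monotone convergence along an exhaustion $F_n\uparrow I_0$ with $F_n$ finite (applied to $\max_{F_n}X_t-X_{t_0}\ge 0$ for some fixed $t_0\in F_1$), it suffices to prove $\E[\max_{t\in F}X_t]\le \E[\max_{t\in F}Y_t]$ for every finite $F\subset I$ of some size $n$.

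Next, I smooth the maximum by the surrogate $F_\beta(x):=\beta^{-1}\log\sum_{k=1}^n e^{\beta x_k}$, which satisfies the pointwise sandwich $0\le F_\beta(x)-\max_k x_k\le \beta^{-1}\log n$. Since Gaussian maxima have finite exponential moments (Borell--TIS), dominated convergence reduces the problem to showing $\E F_\beta(X)\le \E F_\beta(Y)$ for every fixed $\beta>0$ and then letting $\beta\to\infty$.

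The core step is the interpolation. I realize independent copies of $X|_F$ and $Y|_F$ on a common space and set $Z(\theta):=(\cos\theta)X+(\sin\theta)Y$ for $\theta\in[0,\pi/2]$, so that $Z(0)=X$, $Z(\pi/2)=Y$, and for each $\theta$ the vector $Z(\theta)$ is centered Gaussian with covariance $(\cos^2\theta)C^X+(\sin^2\theta)C^Y$. Setting $g(\theta):=\E F_\beta(Z(\theta))$, my goal is $g'(\theta)\ge 0$. Differentiation under the expectation (justified by $\|\nabla F_\beta\|_\infty\le 1$) gives
\[
g'(\theta)=\sum_k\E\!\left[\partial_k F_\beta(Z(\theta))\bigl(-\sin\theta\, X_k+\cos\theta\, Y_k\bigr)\right].
\]
The pair $(-\sin\theta\, X_k+\cos\theta\, Y_k,\; Z_l(\theta))$ is jointly Gaussian with covariance $\sin\theta\cos\theta\,(C^Y_{kl}-C^X_{kl})$, so Gaussian integration by parts (Stein's lemma) yields
\[
g'(\theta)=\sin\theta\cos\theta\sum_{k,l}(C^Y_{kl}-C^X_{kl})\,\E[\partial_k\partial_l F_\beta(Z(\theta))].
\]

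A direct computation shows $\partial_k\partial_l F_\beta(x)=\beta\bigl(p_k\delta_{kl}-p_kp_l\bigr)$ with Gibbs weights $p_k(x):=e^{\beta x_k}/\sum_j e^{\beta x_j}\ge 0$, $\sum_l p_l=1$. Symmetrizing via the latter identity rearranges the Hessian contraction into
\[
\sum_{k,l}A_{kl}(p_k\delta_{kl}-p_kp_l)=\tfrac{1}{2}\sum_{k,l}(A_{kk}+A_{ll}-2A_{kl})\,p_kp_l,\qquad A_{kl}:=C^Y_{kl}-C^X_{kl},
\]
and the Sudakov--Fernique hypothesis is precisely $A_{kk}+A_{ll}-2A_{kl}=\E(Y_k-Y_l)^2-\E(X_k-X_l)^2\ge 0$. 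Since $p_kp_l\ge 0$, the integrand is pointwise non-negative, so $g'(\theta)\ge 0$ and hence $g(\pi/2)\ge g(0)$, i.e.\ $\E F_\beta(X)\le \E F_\beta(Y)$. Letting $\beta\to\infty$ and then $F\uparrow I_0$ concludes the argument. The main technical obstacle is legitimizing the differentiation under the integral and the Gaussian IBP; both are routine given $\|\nabla F_\beta\|_\infty\le 1$, the bounded Hessian of $F_\beta$, and the exponential tails of the Gaussian suprema.
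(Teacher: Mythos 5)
The paper does not prove this statement; it is quoted as a standard preliminary with a citation to \cite[Theorem 2.2.3]{AT}, so there is no internal proof to compare against. Your argument is the standard Gaussian interpolation proof (essentially the one in the cited reference, sometimes attributed to Chatterjee in its soft-max form) and it is correct: the reduction to finite $F$ via separability and monotone convergence is sound; the Hessian identity $\sum_{k,l}A_{kl}(p_k\delta_{kl}-p_kp_l)=\tfrac12\sum_{k,l}(A_{kk}+A_{ll}-2A_{kl})p_kp_l$ follows from $\sum_l p_l=1$; and the hypothesis is precisely $A_{kk}+A_{ll}-2A_{kl}\ge 0$, so the integrand is pointwise non-negative. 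The only cosmetic slip is dropping a factor of $\beta>0$ when passing from the Hessian of $F_\beta$ to $g'(\theta)$, which does not affect the sign or the conclusion.
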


We often rely on Borell-TIS inequality (see \cite[Theorem 2.1.1]{AT}) to 
provide concentration results for the supremum of Gaussian processes.
\begin{thm}[Borell-TIS]\label{thm:borell-tis}
If centered Gaussian process $\{X_t\}_{t\in I}$ is almost surely 
bounded on $I$, then $\E [\sup_{t\in I}X_t] < \infty$ and for 
$\sigma^2_I := \sup_{t\in I}\E X_t^2$ and any $u>0$, 
$$
\P(\sup_{t\in I}X_t-\E \sup_{t\in I}X_t>u)\le e^{-u^2/2 \sigma^2_I} \,.
$$
\end{thm}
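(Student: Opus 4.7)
The plan is to follow the standard four-step route: reduce to the finite-dimensional case by separability, represent each finite maximum as a Lipschitz function of a standard Gaussian vector, invoke Gaussian concentration for Lipschitz functions, and handle the finiteness of $\E[\sup X]$ via a median argument.

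First, by separability (implicit in the a.s. boundedness hypothesis) pick a countable dense $\{t_k\} \subset I$ with $M := \sup_{t \in I} X_t = \sup_k X_{t_k}$ a.s., and let $M_n := \max_{k \le n} X_{t_k}$, so $M_n \uparrow M$ monotonically. It suffices to establish (i) the subgaussian tail for each $M_n$ around $\E M_n$ with constant $\sigma_I^2 = \sup_{t \in I}\E X_t^2$ independent of $n$, and (ii) the uniform bound $\sup_n \E M_n < \infty$; monotone convergence then promotes (ii) to $\E M < \infty$, and Fatou's lemma applied to $\P(M_n - \E M_n > u) \le e^{-u^2/(2\sigma_I^2)}$ together with $\E M_n \to \E M$ gives the full statement.

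Second, write the covariance of $(X_{t_1},\ldots,X_{t_n})$ as $A A^\top$ for some $n \times k$ matrix $A$, so $(X_{t_1},\ldots,X_{t_n}) \stackrel{d}{=} A\xi$ with $\xi \sim \mathcal{N}(0,I_k)$. Then $M_n \stackrel{d}{=} F(\xi)$ where $F(x) := \max_i \langle A^\top e_i, x\rangle$ satisfies $|F(x)-F(y)| \le \max_i \|A^\top e_i\|\,\|x-y\|$, so $F$ is $L_n$-Lipschitz with $L_n = \max_i \sqrt{\E X_{t_i}^2} \le \sigma_I$. The key analytic input is Gaussian concentration for Lipschitz functions: for any $L$-Lipschitz $F:\R^k \to \R$ and standard Gaussian $\xi$,
\[
\P(F(\xi) - \E F(\xi) > u) \le e^{-u^2/(2L^2)}, \qquad u > 0.
\]
Combined with the above representation this proves (i). For (ii), use the median form $\P(|M_n - m_n| > u) \le 2 e^{-u^2/(2\sigma_I^2)}$, which follows by the same concentration bound applied to both $F$ and $-F$ (together with the standard equivalence between concentration around mean and around median). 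Since $M < \infty$ a.s., choose $u_0$ with $\P(M > u_0) \le 1/4$; as $M_n \le M$ one gets $\P(M_n > u_0) \le 1/4 < 1/2$, forcing $m_n \le u_0$ uniformly in $n$. Hence
\[
\E M_n \le u_0 + \int_0^\infty \P(M_n - m_n > u)\,du \le u_0 + \sqrt{2\pi}\,\sigma_I,
\]
uniformly in $n$, which is (ii).

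The main obstacle is the Gaussian concentration inequality for Lipschitz functions, which is the only genuinely nontrivial ingredient; the most self-contained route is Gross's logarithmic Sobolev inequality followed by Herbst's argument. Applying $\mathrm{Ent}_\mu(g^2) \le 2\int|\nabla g|^2\,d\mu$ with $g = e^{\lambda F/2}$ (first for smooth Lipschitz $F$, then by approximation) yields a differential inequality for $\psi(\lambda) := \log \E e^{\lambda F}$ whose integration produces $\log \E e^{\lambda(F - \E F)} \le \lambda^2 L^2/2$, after which Chernoff's bound gives the claimed tail. Alternatively, the Gaussian isoperimetric inequality of Borell and Sudakov-Tsirelson yields the median-form inequality directly; either suffices for the argument above.
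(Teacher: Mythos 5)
The paper never proves this statement: Theorem \ref{thm:borell-tis} is quoted as a standard tool with a pointer to \cite[Theorem 2.1.1]{AT}, so there is no internal proof to compare against. Your argument is correct, and it is essentially the textbook proof behind that citation: reduce to a countable dense subset, write the finite maximum $M_n$ as a $\sigma_I$-Lipschitz function of a standard Gaussian vector (the Lipschitz constant being $\max_i(\E X_{t_i}^2)^{1/2}\le\sigma_I$), apply Gaussian concentration for Lipschitz functions (via log-Sobolev/Herbst or Borell--Sudakov--Tsirelson isoperimetry), and then pass to the limit by monotone convergence and Fatou, using the median bound $m_n\le u_0$ (from $\P(M>u_0)\le 1/4$ and $M_n\le M$) to get $\sup_n\E M_n<\infty$. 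Two small caveats, neither a genuine gap: (1) almost sure boundedness does not by itself yield separability, so one should either assume a separable version or observe that in this paper's applications the processes have continuous sample paths, which legitimizes the reduction to a countable dense set; (2) the two-sided median tail with the exact constant $2e^{-u^2/(2\sigma_I^2)}$ is what the isoperimetric (median) form gives directly, whereas transferring from the mean form shifts the argument of the exponential by $O(\sigma_I)$ --- but since only some uniform-in-$n$ subgaussian bound is needed for $\sup_n\E M_n<\infty$, either route suffices, and the mean-centered bound needed in the statement itself comes out of Herbst's argument with the right constant. Also note the Fatou step should be run at level $u'<u$ and then $u'\uparrow u$, exactly as your monotonicity of $M_n$ and $\E M_n$ permits.
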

We conclude with the standard formula for the distribution of 
a Gaussian process conditioned on finitely many coordinates.
\begin{thm}\label{thm:conditional}
If centered Gaussian process $\{Z_t\}_{t\ge 0}$ has covariance $A(\cdot,\cdot)$,
then for any $\ell$ distinct indices $0 \le t_1 < \cdots < t_\ell$,
conditional on $(Z_{t_i},1\le i\le \ell)$ the process $Z_t$ 
has Gaussian distribution of mean  
$$
m(t):=\sum_{i,j=1}^{\ell}\Omega(i,j)A(t,t_i)Z_{t_j}
$$ 
and covariance function
$$
\widetilde{A}(s,t):=A(s,t)-\sum_{i,j=1}^{\ell}A(s,t_i) \Omega(i,j) A(s,t_j),
$$ 
where $\Omega^{-1}$ is the $\ell$-dimensional covariance matrix of the 
centered Gaussian vector $(Z_{t_i},1\le i\le \ell)$.
\end{thm}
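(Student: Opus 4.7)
The plan is to reduce the claim to the classical finite-dimensional formula for the conditional law of a jointly centered Gaussian vector. Since a centered Gaussian process is determined by its finite-dimensional marginals, it suffices to identify the conditional mean of $Z_t$ and the conditional covariance of $(Z_s,Z_t)$ given $Y:=(Z_{t_1},\ldots,Z_{t_\ell})^{\top}$ for arbitrary $s,t\ge 0$, and to verify that these match $m(t)$ and $\widetilde{A}(s,t)$.

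First I would recall the following linear-projection fact: if $(X,Y)$ is jointly centered Gaussian with covariance blocks $\Sigma_{XX},\Sigma_{XY},\Sigma_{YY}$ and $\Sigma_{YY}$ invertible, then the conditional law of $X$ given $Y$ is Gaussian with mean $\Sigma_{XY}\Sigma_{YY}^{-1}Y$ and covariance $\Sigma_{XX}-\Sigma_{XY}\Sigma_{YY}^{-1}\Sigma_{YX}$. The neat derivation is via the orthogonal decomposition $X=\Sigma_{XY}\Sigma_{YY}^{-1}Y+W$, where the residual satisfies $\Cov(W,Y)=0$; joint Gaussianity then upgrades uncorrelatedness to independence, so the conditional law is read off at once. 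I would then apply this with $\Sigma_{YY}^{-1}=\Omega$ by definition: taking $X=Z_t$ and using cross-covariance entries $A(t,t_i)$ gives the conditional mean $\sum_{i,j}\Omega(i,j)A(t,t_i)Z_{t_j}=m(t)$, while taking $X=(Z_s,Z_t)^{\top}$ and reading off the off-diagonal entry of $\Sigma_{XY}\Sigma_{YY}^{-1}\Sigma_{YX}$ yields the conditional cross-covariance $A(s,t)-\sum_{i,j}A(s,t_i)\Omega(i,j)A(t,t_j)=\widetilde{A}(s,t)$ (after invoking symmetry $A(t_j,t)=A(t,t_j)$).

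To promote these finite-dimensional identifications into a statement about the full process, I would invoke Kolmogorov consistency: the family of Gaussian laws constructed from the mean function $m$ and covariance function $\widetilde{A}$ is automatically consistent across finite index sets, and hence defines a Gaussian process, which is a version of the process conditioned on $(Z_{t_i})_{i=1}^{\ell}$. There is essentially no obstacle in this scheme; the only caveat is a tacit non-degeneracy assumption on $\Cov(Z_{t_1},\ldots,Z_{t_\ell})$, which is implicit in the very definition of $\Omega$. In the degenerate case the same argument goes through after replacing $\Omega$ by a Moore--Penrose pseudoinverse and discarding the redundant $t_i$'s, but since the statement presupposes invertibility this technicality can simply be flagged and set aside.
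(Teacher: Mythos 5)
The paper states Theorem \ref{thm:conditional} as one of several ``standard, well known results'' in the preliminaries and provides no proof, so there is nothing to compare your argument against line by line. Your proof is the classical one (orthogonal decomposition $X=\Sigma_{XY}\Sigma_{YY}^{-1}Y+W$ with $W\perp Y$, independence from joint Gaussianity, then Kolmogorov consistency to pass from finite-dimensional marginals to the process) and it is correct; note in passing that the covariance formula as printed in the paper, $A(s,t)-\sum_{i,j}A(s,t_i)\Omega(i,j)A(s,t_j)$, has a typo (the last factor should be $A(t,t_j)$, as in your derivation, so that $\widetilde{A}$ is symmetric), which you implicitly corrected.
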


\subsection{Proof of Theorem \ref{gen2}}

We begin by showing the positivity of persistence probabilities 
over compact intervals for centered Gaussian processes of unit variance, 
non-negative correlation and a.s. continuous sample path.
\begin{lem}\label{clm1}
Suppose the centered Gaussian process $\{Z(t)\}$ has a.s. 
continuous sample paths, unit variance and non-negative correlation. 
Then, for any $u \in \R$ and compact interval $I$,
$$
\P(\sup_{t\in I}\{Z(t)\}<u) > 0 \,.
$$ 
\end{lem}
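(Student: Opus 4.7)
The plan is to exhibit a decomposition $Z(t) = (g(t)/v)\,\xi + \wt Z(t)$, where $\xi$ is a single centered Gaussian random variable obtained by averaging $Z$ over $I$, $\wt Z$ is a continuous Gaussian process independent of $\xi$, and the deterministic weight $g(t)/v$ is uniformly bounded below by a positive constant on $I$. Then making $\xi$ very negative, which has positive probability since $\xi$ is Gaussian with positive variance, will simultaneously push all values $Z(t)$ below any given threshold $u$, with positive probability. The crucial point is that non-negative correlations together with unit variance and sample-path continuity will deliver this uniform lower bound on $g/v$.

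I would first verify joint continuity of $A(s,t) = \E[Z(s)Z(t)]$ on $I \times I$; this follows from a.s.\ path continuity together with the identity $\E(Z(t)-Z(s))^2 = 2(1-A(s,t))$ and dominated convergence. Next, set $\xi := \int_I Z(s)\,ds$ and $g(t) := \E[Z(t)\,\xi] = \int_I A(s,t)\,ds$. Since $A(t,t)=1$ and $A$ is continuous, each $t \in I$ admits a (possibly one-sided) neighborhood in $I$ on which $A(\cdot,t) > 1/2$, so $g(t) > 0$ pointwise; continuity of $g$ and compactness of $I$ then upgrade this to $\delta := \min_{t\in I} g(t) > 0$, and in particular $v := \Var(\xi) = \int_I g(t)\,dt > 0$. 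Setting $\wt Z(t) := Z(t) - (g(t)/v)\,\xi$, joint Gaussianity together with $\E[\wt Z(t)\,\xi] = 0$ renders $\wt Z$ independent of $\xi$, with a.s.\ continuous sample paths.

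To finish, observe that on the event $\{\xi \le 0\}$ the bound $(g(t)/v)\,\xi \le (\delta/v)\,\xi$ holds pointwise in $t$, whence
\[
\sup_{t \in I} Z(t) \;\le\; \tfrac{\delta}{v}\,\xi + \sup_{t \in I} \wt Z(t).
\]
Since $\sup_I \wt Z$ is a.s.\ finite (continuous process on compact $I$), pick $K$ with $\P(\sup_I \wt Z < K) \ge 1/2$. For any $u \in \R$, choosing $c \le \min\{0,\,(u-K)v/\delta\}$ and using independence of $\xi$ and $\wt Z$ gives
\[
\P\big(\sup_{t \in I} Z(t) < u\big) \;\ge\; \P(\xi < c)\cdot \P(\sup_I \wt Z < K) \;>\; 0,
\]
since $\xi$ has positive density on all of $\R$. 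The main subtlety is securing $\delta > 0$, which is precisely where the three hypotheses---non-negative correlation, unit variance, and sample-path continuity---genuinely combine; the rest of the argument is bookkeeping with standard Gaussian facts.
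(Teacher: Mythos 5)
Your argument is correct, but it takes a genuinely different route from the paper's. The paper argues by contradiction using Slepian's lemma: assuming $\P(\sup_{I_1}Z<u)=0$, it bisects $I_1$ and uses non-negativity of the correlations to show at least one half must also have zero probability; iterating and invoking Cantor's intersection theorem produces a single point $t_\star$ at which, by sample-path continuity, $\P(Z(t_\star)<u)=0$, contradicting nondegeneracy of the marginal. Your proof is instead a direct construction: you extract a one-dimensional common factor $\xi=\int_I Z(s)\,ds$, verify (using non-negative correlation, unit variance and joint continuity of $A$ on the compact $I\times I$) that the loadings $g(t)=\E[Z(t)\xi]$ are bounded below by some $\delta>0$, decompose $Z(t)=(g(t)/v)\xi+\widetilde Z(t)$ with $\widetilde Z$ independent of $\xi$, and force the entire path below $u$ by sending $\xi$ sufficiently negative. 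Both arguments are valid and use all three hypotheses. One minor imprecision: deducing $L^2$-continuity of $Z$ (hence joint continuity of $A$) from a.s.\ path continuity by ``dominated convergence'' requires noting that $4\sup_I Z^2$ is integrable (Borell--TIS/Fernique); alternatively, and more elementarily, a.s.\ convergence of centered Gaussian variables to zero already forces their variances to vanish. Your construction makes the interplay of the hypotheses transparent and produces an explicit event of positive probability, at the cost of needing path integrals and global continuity of the covariance; the paper's argument is shorter given that Slepian's lemma is already its main workhorse, and it only needs continuity of $Z$ near a single point.
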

\begin{proof} Let $M(I):=\sup_{t\in I}\{Z(t)\}$ and suppose that $\P(M(I_1)<u)=0$ 
for some $u \in \R$ and compact interval $I_1$. Representing $I_1$ as the 
disjoint union of intervals $I^{(-)}_2$ and $I^{(+)}_2$ each of 
half the length of $I_1$, we get by the non-negativity of correlations 
and Slepian's lemma (Theorem \ref{thm:slep}) that
$$
0=\P(\, M(I_1)<u) \ge \P(\, M(I_2^{(-)})<u) \, \P(\, M(I^{(+)}_2)<u) \,.
$$
So, either $\P(M(I^{(-)}_2)<u)=0$ or $\P(M(I^{(+)}_2) <u)=0$ and 
proceeding inductively
with the sub-interval for which we have zero probability, we construct 
non-empty nested compact intervals $I_k$ of shrinking diameters
such that $\P(M(I_k)<u)=0$ for all $k$. By Cantor's intersection theorem,
$\bigcap_k I_k$ is a single non-random point $t_\star$. Thus, by the continuity of  
sample paths we get that a.s.
$$
\lim_{k \to \infty} M(I_k) = Z(t_\star)\,.
$$ 
Consequently, 
$$
0 = \lim_{k \rightarrow\infty}\P(\,M(I_k) < u) \ge \P( Z({t_\star})<u )>0 \,,
$$ 
a contradiction which rules out our hypothesis that 
$\{M(I_1)<u\}$ has zero probability.
\end{proof}

We recall some properties of positive, measurable 
slowly varying functions, that are used throughout this paper.
\begin{remark}\label{bgtslow}
For any {$L \in \cR_0$ (namely, positive, measurable, slowly varying function 
on $\R_+$),} the convergence of $\frac{L(\lambda t)}{L(t)}$ to $1$ 
is uniform over $\lambda$ in a compact subset of $(0,\infty)$
(see \cite[Theorem 1.2.1]{BGT}). Further, by the representation theorem 
(see \cite[Theorem 1.3.1]{BGT}),
there exists then  
$\widetilde{L} \in \cR_0$ such that 
$$
\lim_{x\rightarrow\infty}\frac{\widetilde{L}(x)}{ L(x)}=1,
$$ 
and $x \mapsto x^\eta \widetilde{L}(x)$ is eventually increasing (decreasing) 
if $\eta>0$ ($\eta<0$ resp.). That is, up to a universal constant {factor
(that depend on $L(\cdot)$), the function $x^\eta L(x)$ may be assumed}
eventually increasing (decreasing) if $\eta>0$ ($\eta<0$ resp.).		
\end{remark}

W.l.o.g. we assume throughout that $\{Z(\cdot)\}$ 
has been re-scaled so that $\E[Z(t)^2]=1$ for all 
$t \in \R_+$ and state next three auxiliary lemmas 
which are needed for proving Theorem \ref{gen2}
(while deferring the proof of these lemmas 
to the end of the section).
\begin{lem}\label{clm2}
For $\alpha\ge 0$ and $\rho\in\mathcal{R}_\alpha$, let 
${L}_\rho(x)=x^\alpha \rho(x) \in \cR_0$ {for which we further assume  
the eventual monotonicity properties of Remark \ref{bgtslow}.}
	\begin{enumerate}[(a)]\item
		If $0\le \alpha<1$,
		\begin{align}\label{eq:slow_estimate<1}
		\lim\limits_{b\rightarrow\infty}\sup_{a\in [0,b)}\Big|\frac{I_\rho(b)-I_\rho(a)}{L_\rho(b)(b^{1-\alpha}-a^{1-\alpha})}-\frac{1}{1-\alpha}\Big|=0.
		\end{align}
		
		\item
		If $\alpha>1$,
		\begin{align}\label{eq:slow_estimate>1}
		\lim\limits_{b\rightarrow\infty}\sup_{a\in (b,\infty)}\Big|\frac{I_\rho(a)-I_\rho(b)}{L_\rho(b)(b^{1-\alpha}-a^{1-\alpha})}-\frac{1}{\alpha-1}\Big|=0.
		\end{align}

	\end{enumerate}
\end{lem}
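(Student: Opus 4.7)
The plan is to recognize the lemma as a uniform form of Karamata's theorem, and to reduce both statements to the single fact that $L_\rho(bt)/L_\rho(b) \to 1$ uniformly on compact subsets of $(0,\infty)$ (Remark \ref{bgtslow}), supplemented by Potter's bounds on slowly varying functions to handle the endpoints of the range of $a$. In both parts I will write $\rho(s) = s^{-\alpha} L_\rho(s)$, make the substitution $s = bt$, and parametrize by $u := a/b$; the target constants $1/(1-\alpha)$ and $1/(\alpha-1)$ will appear as $\int_I t^{-\alpha}\, dt$ divided by $|1 - u^{1-\alpha}|$ over the appropriate interval $I$.

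For part (a), substitution yields $I_\rho(b) - I_\rho(a) = b^{1-\alpha}\int_u^1 t^{-\alpha} L_\rho(bt)\, dt$ and $b^{1-\alpha} - a^{1-\alpha} = b^{1-\alpha}(1 - u^{1-\alpha})$, whence
\begin{equation*}
\frac{I_\rho(b) - I_\rho(a)}{L_\rho(b)(b^{1-\alpha}-a^{1-\alpha})} - \frac{1}{1-\alpha}
= \frac{1}{1 - u^{1-\alpha}} \int_u^1 t^{-\alpha}\Bigl[\frac{L_\rho(bt)}{L_\rho(b)} - 1\Bigr] dt.
\end{equation*}
Fix a small $\epsilon > 0$. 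For $u \in [\epsilon,1)$ the uniform convergence of $L_\rho(bt)/L_\rho(b)$ on $[\epsilon,1]$ bounds the right-hand side by $\eta/(1-\alpha)$ with $\eta \to 0$ as $b \to \infty$. For $u \in [0,\epsilon]$ the denominator is at least $1 - \epsilon^{1-\alpha}$, bounded away from zero, and I split the integral at $\epsilon$: the piece over $[\epsilon,1]$ is handled as before, while on $[u,\epsilon]$ I use Potter's bound $L_\rho(bt)/L_\rho(b) \le 2 t^{-\delta}$ valid for $t \le 1$ and any $\delta > 0$ (with $b$ large), making the integrand at most $3\, t^{-\alpha-\delta}$. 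Choosing $\delta < 1-\alpha$ yields integrability and an $O(\epsilon^{1-\alpha-\delta})$ bound that vanishes with $\epsilon$; the tiny range $t \le B_0/b$ where Potter may fail contributes only $O(b^{\alpha-1})$ since $\rho$ is bounded on $[0,B_0]$. Taking first $\epsilon$ small and then $b$ large gives the uniform estimate.

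Part (b) proceeds symmetrically with $u = a/b \in (1,\infty)$, producing the analogous identity with integration over $[1,u]$, target constant $1/(\alpha-1)$, and denominator $1 - u^{1-\alpha} \in (0,1)$ (positive because $1-\alpha < 0$). Split into a bulk $u \in (1, 1/\epsilon]$, where uniform convergence on $[1,1/\epsilon]$ controls the ratio by $\eta/(\alpha-1)$ (the apparent singularity at $u=1$ being removable, since numerator and $1 - u^{1-\alpha}$ vanish at the same linear rate), and a tail $u > 1/\epsilon$, where Potter's bound $L_\rho(bt)/L_\rho(b) \le 2 t^{\delta}$ for $t \ge 1$ combined with $\alpha - 1 > 0$ makes $t^{-\alpha+\delta}$ integrable over $[1,\infty)$ for $\delta < \alpha - 1$, giving a tail contribution of order $\epsilon^{\alpha-1-\delta}$, arbitrarily small for small $\epsilon$.

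The main obstacle I expect is the uniformity of the estimates at the extreme endpoints: in part (a) when $a$ is small (perhaps bounded, so that Karamata's asymptotic for $I_\rho(a)$ does not even apply), and in part (b) when $a/b \to \infty$ so the upper limit of integration escapes any compact set. Both are resolved precisely by replacing asymptotic statements about $I_\rho$ by direct integral estimates on the quotient $L_\rho(bt)/L_\rho(b)$ through Potter's quantitative bounds, which is why that quotient, rather than $I_\rho$ itself, is the right object to work with.
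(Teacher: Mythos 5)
Your proof is correct and follows essentially the same route as the paper's: the substitution $x=bt$ reduces both parts to the uniform convergence of $L_\rho(bt)/L_\rho(b)\to 1$ on compacts paired with a Potter-type bound near the degenerate endpoint of $u=a/b$, plus a direct estimate (boundedness of $\rho$) for the tiny range $t\lesssim 1/b$ where the Potter bound is unavailable. The paper packages the normalization $\int t^{-\alpha}\,dt$ as a probability measure $\mu_\alpha$ and invokes its Remark \ref{bgtslow} (eventual monotonicity of $x^{\pm\eta}L_\rho(x)$ via the representation theorem) in place of Potter's inequality, but these are cosmetic differences; the estimates and their decomposition into bulk plus endpoint are the same.
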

\begin{lem}\label{slow}
Suppose $\rho \in \cR_\alpha$ for $\alpha  \ge 0$.
\begin{enumerate}[(a)] 
\item
The function $I_\rho(\cdot)$ is a regularly varying function of order $(1-\alpha)_+$
and
\begin{equation}\label{eq:lem-2.1_lower_bound} 
 \limsup_{n,M\rightarrow\infty}\frac{M\sum_{\ell=1}^{n}\rho(\ell M)}{I_\rho(nM)}<\infty.
\end{equation}
If  $I_\rho(\infty)<\infty$, then we have the stronger conclusion
\begin{equation}\label{eq:lem-2.1_lower_bound_new} 
 \limsup_{M\rightarrow\infty}M\sum_{\ell=1}^{\infty}\rho(\ell M)=0.
\end{equation}
\item Suppose $\alpha\in [0,1]$, {with $\rho(x) \to 0$ when $\alpha=0$}
and $I_\rho(\infty)=\infty$ when $\alpha=1$. Then, 
fixing $\mu>0$ we have for $M:=\mu I_\rho(T)$ 
that 
\begin{equation}\label{eq:lem-2.1} 
 \lim_{T\rightarrow\infty} \sum_{\ell=1}^{\lceil T/M\rceil}\rho(\ell M) =\frac{1}{\mu}\,.
\end{equation}
\end{enumerate}
\end{lem}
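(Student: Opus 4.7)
The plan is a Riemann-sum comparison between $\sum_\ell \rho(\ell M)$ and $I_\rho$, where Remark~\ref{bgtslow} lets me replace $\rho$, up to a bounded factor, by an equivalent eventually non-increasing function. First, for the regular variation order of $I_\rho$ in part (a), I would invoke Karamata's theorem: for $\alpha \in [0,1)$ one has $I_\rho(t) \sim t\rho(t)/(1-\alpha) \in \cR_{1-\alpha}$; for $\alpha = 1$ with $I_\rho(\infty) = \infty$, $I_\rho$ is slowly varying; while for $\alpha > 1$ or $\alpha = 1$ with $I_\rho(\infty) < \infty$, $I_\rho$ tends to a finite limit and is trivially in $\cR_0$. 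All three cases agree with the order $(1-\alpha)_+$.

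For the limsup bound in part (a), the comparison $M\rho(\ell M) \le \int_{(\ell-1)M}^{\ell M}\rho(s)\,ds$ for $\ell \ge 2$ gives
\[
M\sum_{\ell=1}^n \rho(\ell M) \le M\rho(M) + I_\rho(nM) - I_\rho(M).
\]
The same monotonicity yields $M\rho(M) \le 2\bigl(I_\rho(M) - I_\rho(M/2)\bigr) \le 2 I_\rho(nM)$ for $M$ large, so the ratio in \eqref{eq:lem-2.1_lower_bound} is bounded by $3$. When $I_\rho(\infty) < \infty$ the tail $I_\rho(\infty) - I_\rho(M)$ vanishes; to strengthen to \eqref{eq:lem-2.1_lower_bound_new} I still need $M\rho(M) \to 0$. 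This is immediate if $\alpha > 1$; for $\alpha = 1$, the finiteness $\int_0^\infty L(s)/s\,ds < \infty$ with $L(s) := s\rho(s)$ slowly varying and eventually monotone forces $L(M) \to 0$, since otherwise the integral would diverge.

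For part (b), I would exploit both sides of the sandwich
\[
\frac{I_\rho\bigl((N+1)M\bigr) - I_\rho(M)}{M} \le \sum_{\ell=1}^{N}\rho(\ell M) \le \frac{I_\rho(NM)}{M},
\]
with $N := \lceil T/M \rceil$ and $M := \mu I_\rho(T)$. In every sub-case of (b) one has $\rho(t) \to 0$ (for $\alpha = 0$ by hypothesis, for $\alpha = 1$ since $\rho(t) = L(t)/t \le 1$ with $L$ slowly varying forces $\rho \to 0$, and for $\alpha \in (0,1)$ automatically). Cesaro then gives $I_\rho(T)/T \to 0$, hence $M/T \to 0$ and both $NM, (N+1)M$ equal $T(1+o(1))$. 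Regular variation of $I_\rho$ of order $(1-\alpha)_+$ therefore produces $I_\rho(NM)/I_\rho(T), I_\rho((N+1)M)/I_\rho(T) \to 1$, while $I_\rho(M)/M \to 0$ (again by Cesaro) gives $I_\rho(M)/I_\rho(T) = \mu I_\rho(M)/M \to 0$. Dividing the sandwich by $I_\rho(T) = M/\mu$ collapses both sides to $1/\mu$.

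The main technical obstacle is the slowly-varying regime $\alpha \in \{0,1\}$, where $I_\rho$ is not separated from $I_\rho(M)$ at polynomial scale and slow variation does not directly yield $I_\rho(NM)/I_\rho(T) \to 1$. The argument only closes once one verifies $\rho(t) \to 0$ (so that $M/T \to 0$ and hence $NM/T \to 1$), and then invokes uniform convergence of $I_\rho(\lambda T)/I_\rho(T) \to 1$ on compact $\lambda$-intervals, as furnished by \cite[Theorem 1.2.1]{BGT} via Remark~\ref{bgtslow}.
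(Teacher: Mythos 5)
Your overall strategy (Riemann‐sum sandwiching of $\sum_\ell \rho(\ell M)$ against $I_\rho$) is the same one the paper uses, and for $\alpha>0$ your argument goes through essentially unchanged. The gap is at $\alpha=0$, which is in scope in both parts of the lemma. Your sandwich inequalities, e.g.\ $M\rho(\ell M)\le \int_{(\ell-1)M}^{\ell M}\rho(s)\,ds$ for $\ell\ge 2$ and $M\rho(M)\le 2\bigl(I_\rho(M)-I_\rho(M/2)\bigr)$, require $\rho$ eventually non-increasing, and you justify this via Remark~\ref{bgtslow}. But that remark only produces an asymptotically equivalent $\widetilde L$ for which $x^\eta\widetilde L(x)$ is eventually monotone when $\eta\ne 0$; writing $\rho(x)=x^{-\alpha}L_\rho(x)$ and taking $\eta=-\alpha$ gives an eventually non-increasing replacement only when $\alpha>0$. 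For $\alpha=0$ a slowly varying $\rho$ need not be asymptotically equivalent to any monotone function, so the replacement step fails and the sandwich inequalities are unjustified. (The strengthened claim \eqref{eq:lem-2.1_lower_bound_new} is unaffected, since $I_\rho(\infty)<\infty$ forces $\alpha\ge 1$, but \eqref{eq:lem-2.1_lower_bound} and \eqref{eq:lem-2.1} both include $\alpha=0$; the latter is what Proposition~\ref{decay_slowly_varying} relies on.)

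The paper's proof avoids this entirely by never using monotonicity: it compares $M\rho(\ell M)$ to $\int_{\ell M}^{(\ell+1)M}\rho$ via the ratio
$\kappa(M):=\sup_{x\ge M}\sup_{\theta\in[1,2]}\rho(x)/\rho(\theta x)$,
which is eventually bounded purely by uniform convergence of $\rho(\theta x)/\rho(x)\to\theta^{-\alpha}$ on compact $\theta$-sets (Remark~\ref{bgtslow}, first part), for every $\alpha\ge 0$. Similarly for the two-sided bound in part (b), the paper uses $\bigl|\int_{(\ell-1)M}^{\ell M}\rho/(M\rho(\ell M))-1\bigr|\le\sup_{\lambda\in[1-1/\ell,1]}|\rho(\lambda\ell M)/\rho(\ell M)-1|$, valid uniformly for $\ell$ and $M$ large. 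If you swap your monotonicity comparisons for these ratio bounds, the rest of your argument — Karamata for the order of $I_\rho$, $\rho(t)\to 0$ giving $I_\rho(T)/T\to 0$ hence $M/T\to 0$ and $NM/T\to 1$, and $I_\rho(M)/I_\rho(T)=\mu I_\rho(M)/M\to 0$ — is correct as written. One further small slip: when you deduce $L(M)\to 0$ from $\int L(s)/s\,ds<\infty$ for $\alpha=1$, you invoke eventual monotonicity of $L$, which again is not available; but uniform slow variation of $L$ alone already gives $\int_M^{2M}L(s)/s\,ds\gtrsim L(M)$, which suffices.
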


\begin{lem}\label{easy}
	If the auto-correlation $A(\cdot,\cdot)$ of a centered Gaussian process $\{Z(\cdot)\}$
satisfies \eqref{nonsum2} for some 
$\rho\in \mathcal{R}_\alpha$ and $\alpha\in (0,1]$.
Then, there exist ${\eta},\delta>0$ such that 
\begin{equation}\label{eq:lem-2.2}
\lim\limits_{M\rightarrow\infty}\frac{1}{\log M} 
\sup_{s \ge M/\eta}
\log\P(\sup_{t\in[s,s+M]}\{Z(t)\}<\sqrt{\delta\log M})=-\infty.
\end{equation}
\end{lem}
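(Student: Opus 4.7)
My plan is to discretize $[s,s+M]$ at step $L=M^\theta$ (for small $\theta>0$), dominate the restriction of $Z$ to these sample points by an explicit one-factor Gaussian vector via Slepian's lemma, and then estimate the resulting maximum-probability by a Mills-ratio computation. Set $\eta:=\widetilde\eta$ and, invoking Remark \ref{bgtslow}, assume WLOG that $\rho$ is eventually non-increasing. Fix $\theta,\delta>0$ with $\theta+\delta/2<1$ (say $\theta=\delta=1/4$), and for each large $M$ and each $s\ge M/\eta$, set $L:=M^\theta$, $n:=\lfloor M/L\rfloor$ and $t_i:=s+iL$ for $i=0,\ldots,n-1$. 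Then $t_i\ge M/\eta\to\infty$, $|t_j-t_i|\ge L=M^\theta\to\infty$, and $|t_j-t_i|\le M\le\widetilde\eta\,t_i$; hence condition \eqref{nonsum2} furnishes a constant $C_0<\infty$, independent of $s$, with $A(t_i,t_j)\le C_0\,\rho(|t_j-t_i|)\le C_0\,\rho(L)=:\epsilon$ whenever $i\ne j$ and $M$ is large enough. Since $\alpha>0$, $\epsilon=O(M^{-\alpha\theta+o(1)})\to 0$.

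Introducing i.i.d.\ standard normals $\xi_0,\ldots,\xi_{n-1}$, set $Y_i:=\sqrt{1-\epsilon}\,\xi_i+\sqrt{\epsilon}\,\xi_0$, so that $\E Y_i^2=1$ and $\E Y_iY_j=\epsilon\ge A(t_i,t_j)$ for $i\ne j$. Slepian's lemma (Theorem \ref{thm:slep}), after conditioning on $\xi_0$, gives
\begin{equation*}
\P\bigl(\sup_{t\in[s,s+M]}Z(t)<u\bigr)\le\P(\max_i Y_i<u)=\E\bigl[\Phi\bigl((u-\sqrt{\epsilon}\,\xi_0)/\sqrt{1-\epsilon}\bigr)^n\bigr].
\end{equation*}
Taking $u:=\sqrt{\delta\log M}$ and truncating the expectation at $\xi_0=-K$ with $K:=(\log M)^{3/4}$ yields
\begin{equation*}
\P(\max_i Y_i<u)\le\Phi(u'')^n+e^{-K^2/2},\qquad u'':=(u+\sqrt{\epsilon}\,K)/\sqrt{1-\epsilon}.
\end{equation*}

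It remains to estimate both terms. Since $\sqrt{\epsilon}\,K=o(1)$ and $\epsilon=o(1)$, one has $u''^2/(2\log M)\to\delta/2$, whence the Mills-ratio bound gives $1-\Phi(u'')\ge cM^{-\delta/2-o(1)}/\sqrt{\log M}$ and hence $\Phi(u'')^n\le\exp(-c\,M^{1-\theta-\delta/2-o(1)}/\sqrt{\log M})$; since $1-\theta-\delta/2>0$, this is $o(M^{-A})$ for every $A>0$. Likewise $e^{-K^2/2}=\exp(-(\log M)^{3/2}/2)=o(M^{-A})$ for every $A>0$. Combining, $(\log M)^{-1}\log\P\to-\infty$ uniformly in $s\ge M/\eta$, proving \eqref{eq:lem-2.2}. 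The only real subtlety is the uniformity in $s$: the constant $C_0$ and the threshold in $M$ beyond which \eqref{nonsum2} applies depend only on the regime $t,\tau\to\infty$ with $\tau\le\widetilde\eta\,t$, which is entered uniformly for all $s\ge M/\eta$ once $M$ is large enough; otherwise the argument is a robust combination of Slepian's lemma with a standard factor-model computation.
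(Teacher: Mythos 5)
Your proof is correct in substance, and it takes a genuinely (if mildly) different route from the paper's. Both arguments rest on the same core device: discretize $[s,s+M]$, bound the pairwise correlations $A(t_i,t_j)\le\epsilon$ via \eqref{nonsum2} and the monotonicity afforded by Remark \ref{bgtslow}, dominate by the one-factor Gaussian vector $\sqrt{1-\epsilon}\,\xi_i+\sqrt{\epsilon}\,\xi_0$ via Slepian's lemma, and split according to whether the common factor $\xi_0$ is far out in the lower tail. Where you diverge: the paper uses a \emph{fixed} mesh $\tau_0$ chosen so that $C\rho(\tau_0)\le\epsilon$ for a free small parameter $\epsilon$, getting $n=\Theta(M)$ points and the truncation threshold $r\epsilon^{-1/2}$; this gives $\frac{1}{\log M}\log\P(\cdot)\le-\frac{\delta}{2\epsilon}$ and requires sending $\epsilon\downarrow0$ at the end to reach $-\infty$. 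You instead let the mesh $L=M^\theta$ grow with $M$, so that $\epsilon=C_0\rho(L)=M^{-\alpha\theta+o(1)}$ shrinks automatically; with the softer cutoff $K=(\log M)^{3/4}$ (for which $\sqrt{\epsilon}\,K=o(1)$), both the $\Phi(u'')^n$ term and the $e^{-K^2/2}$ term are super-polynomially small, yielding $-\infty$ directly with no final limiting step. Your version trades $n$ down to $M^{1-\theta}$ but makes $1-\Phi(u'')$ larger (order $M^{-\delta/2-o(1)}$ rather than $M^{-9\delta/2}$), which is why the exponent constraint $\theta+\delta/2<1$ suffices; the paper's version keeps more points and pays with the cruder bound $X_1<3r$. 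Both are clean.

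One small slip: you introduce $n$ i.i.d.\ normals $\xi_0,\ldots,\xi_{n-1}$ and set $Y_i=\sqrt{1-\epsilon}\,\xi_i+\sqrt{\epsilon}\,\xi_0$ for the same index range, so $Y_0=(\sqrt{1-\epsilon}+\sqrt{\epsilon})\xi_0$ has variance $1+2\sqrt{\epsilon(1-\epsilon)}\ne1$. You need $n+1$ independent normals, with $\xi_0$ reserved as the latent factor and $\xi_1,\ldots,\xi_n$ attached to the sample points (as the paper does with $\{X_i\}_{i=0}^n$). The conditional representation $\E[\Phi((u-\sqrt{\epsilon}\,\xi_0)/\sqrt{1-\epsilon})^n]$ and the rest of the argument are then exactly as you wrote.
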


\begin{proof}[Proof of Theorem \ref{gen2}]
Throughout the proof, all constants implied by the notation 
$\lesssim$ depend only on the 
function $A(\cdot,\cdot)$.

\noindent
(a).
By (\ref{nonsum2}) there exist $\widetilde{\eta}>0$ small, $T_\star$ finite 
and $r = 1-\widetilde{\eta} \in (0,1)$ such that 
\begin{align}\label{u00}
\sup_{t \in [rT,T-\tau]} A(t,t+\tau) \lesssim\rho(\tau)\,, \qquad \forall \tau \ge 0\,,\;\; 
T \ge T_\star \,.
\end{align}
For some large universal constant $\lambda < \infty$ to be chosen in 
the sequel, we set $M=M(T):=\lambda I_\rho(T)$ and 
$n=n(T):=\lfloor \frac{(1-r) T}{2M}\rfloor$, 
both of which diverge with $T \to \infty$ due to 
our assumptions on $\rho(\cdot)$. We then 
consider the following subset of $[rT,T]$,  
$$
\cJ:=\bigcup\limits_{\ell=1}^n \cJ_{2\ell}\,.
$$ 
That is, $\cJ$ is the union of every other sub-interval $\cJ_\ell:=[s_\ell,s_{\ell+1}]$, 
where $s_\ell:= rT + (\ell-1) M$ for $\ell \ge 1$, 
and $n(T)$ is the largest 
$\ell \in \N$ such that $s_{2\ell+1} \le T$. 
With $\cJ \subset [0,T]$, we
trivially have that 
\begin{equation}\label{eq:trivial-ubd}
\P(\,\sup_{t\in[0,T]}\,\{Z(t)\}<0)\le \P(\,\sup_{t\in \cJ}\,\{Z(t)\}\,<0)\,.
\end{equation}
For $t\in \cJ$ let $J(t)=\ell$ when $t\in \cJ_{2\ell}$ 
for some $\ell \in \{1,2,\ldots, n\}$ noting that for all $s,t \in \cJ$, 
\begin{align}\label{u3}
A(s,t)\le \frac{1}{2}A(s,t) 1_{\{J(s)=J(t)\}}+\frac{1}{2}B(J(s),J(t)),
\end{align}
where 
\begin{equation}\label{eq:B-def}
B(i,j):=2\sup\{A(s,t): \,s \in \cJ_{2i},\, t \in \cJ_{2j} \}\,,  \text{ if }i\ne j,
\qquad B(i,i)=1. 
\end{equation}
If $s,t \in \cJ$ with $J(s) \ne J(t)$, then clearly 
\begin{equation}\label{eq:time-bd}
M |J(s)-J(t)| \le |s-t| \le  3 M |J(s)-J(t)|\,. 
\end{equation}
Since 
$M = M(T) \to \infty$, 
we have
from \eqref{u00}, \eqref{eq:B-def} and \eqref{eq:time-bd}, that with $\rho(\cdot)$
regularly varying, 
\begin{equation}\label{eq:A-unv-bd}
B(i,j) 
\lesssim \sup_{x \in [M|i-j|,3M|i-j|]} \rho(x)
\lesssim \rho( M |i-j|)\,,
\end{equation}
uniformly in $i \ne j$ and for all $M$ large enough. 
We thus deduce by 
\eqref{eq:A-unv-bd} and \eqref{eq:lem-2.1_lower_bound} that
\begin{align}\label{eq:xi-bd}
\xi(T) := \sup_{1 \le j \le n} \; \big\{ \sum\limits_{i=1, i\neq j}^n \, B(i,j) \big\} 
\lesssim \sum\limits_{\ell=1}^{n} \rho(\ell M) \lesssim \frac{I_\rho(n M)}{M} 
\le \frac{1}{\lambda}
\end{align}
where the right-most inequality results from having 
chosen $M=\lambda I_\rho(T)$ and $n M \le T$ (so 
$I_\rho(nM) \le I_\rho(T)$). The universal constant on the \abbr{rhs} of 
\eqref{eq:xi-bd} is independent of $\lambda$, hence 
there exist $\lambda=\lambda_1$ and $T_{\star\star}$ finite, such that  
$\xi(T) \le 1/2$ for all $T \ge T_{\star\star}$.
Using hereafter $\lambda=\lambda_1$ for the remainder of part (a) and the fact that $B(i,i)=1$, it follows  
by the Gershgorin circle theorem and the interlacing property of eigenvalues, that for any $T \ge T_{\star\star}$, 
the principal sub-matrices of the 
symmetric $n$-dimensional matrix ${\bf B}=\{B(i,j)\}$   
have all their eigenvalues within $[1/2,3/2]$. In particular, ${\bf B}$ is positive definite, and  
with $\{X_\ell\}_{\ell=1}^n$ denoting the centered 
Gaussian random vector of covariance matrix ${\bf B}$, 
upon applying the argument in 
\cite[display following (2.5)]{DM} 
for principal sub-matrices of ${\bf B}$, 
we get that 
for any $L>0$ and all $1 \le i_1 < \cdots < i_k \le n$,
\begin{align}\label{u5}
\P(\sup_{l=1}^k \{ X_{i_l} \} <-\sqrt{L}) \le 
3^{k/2}\P(X_1 > \sqrt{2L/3})^k \,.
\end{align}
Next, we denote by $\{\overline{Z}(t), t \in \cJ\}$ the centered Gaussian 
process which has the same law as $\{Z(t)\}$ when restricted to each 
sub-interval $\cJ_{2\ell}$, while being independent across different 
sub-intervals and independent of the random vector $\{X_\ell\}_{\ell=1}^{n}$.
Then by (\ref{u3}), upon applying Slepian's lemma 
we get that for all $L>0$ and $T\ge \max(T_\star,T_{\star\star})$,
\begin{align}\label{eq:bd-supJ}
\P(\sup_{t \in \cJ}\{Z(t)\}<0)\le \E \Big[ \prod\limits_{\ell=1}^{n(T)}\big(
\P(\sup_{t\in\cJ_{2\ell}} \{\overline{Z}(t)\}<\sqrt{L})+1_{\{X_\ell<-\sqrt{L}\}} \big) \Big]
\end{align}
(c.f. \cite[(2.4)]{DM} for a more detailed version of 
this argument). Utilizing now (\ref{u5}), we deduce 
from \eqref{eq:bd-supJ} in a similar manner as 
the derivation of \cite[(2.6)]{DM} that for all $L>0$,
\begin{align}
\P(\,\sup_{t \in \cJ}\{Z(t)\}<0\,) &\le
\Big[ f(\delta,T)+\sqrt{3} \P(X_1>\sqrt{2L/3}) \Big]^{n(T)} \nonumber \\ 
&\le 2^{n(T)} \max\Big[f(\delta,T),\sqrt{3} \P(X_1>\sqrt{2L/3})\Big]^{n(T)}
\,,
\label{eq:new-rev1}
\end{align}
where 
$$
f(\delta,T) := 
\sup_{\ell=1}^{n(T)} \, \P(\, \sup_{t\in\cJ_{2\ell}} \, \{Z(t)\} < \sqrt{L} \,) \,.
$$
Moreover, setting $L=L(T):=\delta \log I_\rho(T)$,
upon considering \eqref{eq:lem-2.2} for the
intervals $\cJ_{2\ell}$ within $[rT,T]$,
of length $M(T)=\lambda_1 I_\rho(T)$ each,
we can choose $\delta>0$ 
small enough so that  
 \begin{align}\label{u6}
\lim\limits_{T\rightarrow\infty}\, \frac{\log f(\delta,T)}{\log I_\rho(T)}=-\infty
\,.
\end{align}
Since $n(T) L(T) = \frac{(1-r) \delta}{2 \lambda_1} a_\rho(T)$,
considering $- a_\rho(T)^{-1} \log$ of both sides of 
\eqref{eq:new-rev1}, we deduce from the usual 
tail estimates for the $N(0,1)$ law of $X_1$, that 
$$
- \limsup\limits_{T\rightarrow\infty}\frac{1}{a_\rho(T)}\log \P(\sup_{t\in \cJ}\{Z(t)\}<0)\ge 
\frac{(1-r)\delta}{6\lambda_1} > 0 
$$
(with a negligible contribution of $f(\delta,T)$ due to \eqref{u6}). Combined
with \eqref{eq:trivial-ubd} this yields the stated upper bound \eqref{eq:ubd-pers}.

\bigskip
\noindent
(b) {\bf Step I.} We first show that suffices for \eqref{eq:theta-pers} to 
have for some $r \in (0,1)$ and finite $C_1$, $T_1$,  
\begin{align}\label{u1}
\P(\sup_{t\in[rT,T]}\{Z(t)\}<0)\ge e^{-C_1 a_\rho(T)} 
\qquad \quad \forall\, T \ge T_1 \,.
\end{align}
To this effect, for $T\ge T_1$ set $m=m(T):=\big\lceil \frac{\log (T/T_1)}{\log(1/r)}\big\rceil$ and
\begin{equation}\label{eq:wh-a-rho}
\widehat{a}_\rho(T):=\sum_{i=1}^m a_\rho(T_i) \,, \quad T_i := r^{1-i} T_1,\;\; i \ge 0 \,,
\end{equation}
so that $T_m \in [T,T/r]$. With $A(\cdot,\cdot)$ non-negative, by 
Slepian's lemma and \eqref{u1} we have
\begin{align}
\P(\sup_{t\in[0,T]}\{Z(t)\}<0) &\ge \P(\sup_{t\in[0,T_0]}\{Z(t)\}<0)\prod\limits_{i=1}^{m}
\P(\sup_{t\in[T_{i-1},T_i]}\{Z(t)\}<0)\nonumber \\
&\ge \P(\sup_{t\in[0,T_0]}\{Z(t)\}<0) e^{-C_1
\widehat{a}_\rho(T)}\,.
\label{eq:lbd-no-r}
\end{align}
Next, by Lemma \ref{clm1} the 
event $\{\sup_{t\in[0,T_0]}\{Z(t)\}<0\}$ has 
positive probability, so 
considering the $\limsup$ as $T \to \infty$ of 
$- a_\rho(T)^{-1} \log$ of both sides of the 
preceding inequality, we get \eqref{eq:theta-pers} 
upon showing that  
\begin{align}\label{eq:arho-equiv}
\widehat{a}_\rho(T) \lesssim a_\rho(T) \,. 
\end{align}
To this end, recall by Lemma \ref{slow} that $t \mapsto I_\rho(t)$ 
is regularly varying of order $1-\alpha$, hence
$a_\rho(t)=t\log I_\rho(t)/I_\rho(t)$ is regularly 
varying of order $\alpha>0$. Thus, 
there exists $K \ge 1$ finite such that 
$a_\rho(T_{i-1})\le r^{\alpha/2} a_\rho(T_i)$
for all $i \ge K$, from which we deduce that 
$$
\widehat{a}_\rho(T) 
\le \sum\limits_{i=1}^K a_\rho(T_i)
+ a_\rho(T_m) \sum_{l=0}^{\infty} r^{\alpha l/2} \,.
$$
The first sum on the \abbr{rhs} is finite and independent 
of $T$. Further, with $\alpha>0$ and $r \in (0,1)$, 
the same applies for the second sum there. 
Lastly, since $a_\rho(\cdot)$ is regularly varying 
of positive order, $a_\rho(T_m) \lesssim a_\rho(T)$,
yielding \eqref{eq:arho-equiv} and thereby 
\eqref{eq:theta-pers}. 

\smallskip
\noindent
{\bf Step II.} We proceed to verify \eqref{u1} for 
$r=1 - \eta \in (0,1)$ and $\eta$ small enough so that 
by \eqref{nonsum1} in addition to \eqref{u00}, we further 
have for all $T$ large enough 
\begin{equation}\label{l00}
\rho(\tau) \lesssim \inf_{t \in [rT,T-\tau]} A(t,t+\tau) \,, \qquad \forall \tau \ge 0 \,
\,.  
\end{equation} 
Then, for such $T$ large, set 
$M=M(T)=\lambda I_\rho(T)$ for some finite  
$\lambda$ to be chosen in the sequel, and cover the interval
$[rT,T]$ by 
$$
\cJ := \bigcup_{\ell=1}^{3n'} \overline{\cJ_\ell}
$$ 
for the corresponding open sub-intervals 
$\cJ_\ell=(s_{\ell},s_{\ell+1})$, with 
$s_\ell=rT+(\ell-1)M$ for $\ell=1,\ldots,n'$ 
and $n'=n'(T):=\lceil \frac{(1-r)T}{3M}\rceil$ 
the smallest integer for which $[rT,T] \subseteq \cJ$.
An application of Slepian's lemma gives
\begin{align*}
\P(\sup_{t\in[rT,T]}\{Z(t)\}<0) \ge \prod_{i=0}^2\P(\sup_{t\in \cJ^i}\{Z(t)<0),\quad 
\cJ^i:=\bigcup_{\ell=1}^{n'} \overline{\cJ}_{3\ell-i} 
\end{align*}
We will show that for $i=0$,
\begin{align}\label{claim:lower_bound}
\liminf_{T\rightarrow\infty}\frac{1}{a_\rho(T)}\log\P(\sup_{t\in \cJ^i}\{Z(t)\}<0)
> - \infty 
\end{align}
and with the same reasoning applicable for $i=1,2$, the bound \eqref{u1} follows for suitably chosen $C_1,T_1$ finite.
Turning to show \eqref{claim:lower_bound}, we take $L=L(T):=\beta\log I_\rho(T)$ 
for some $\beta=\beta(\lambda)$ finite to be determined later, and 
get a lower bound by enforcing the event
\begin{align*}
\Gamma   &:= \{\; -\sqrt{L}< Z(s_{3\ell-1}) < -(3/4) \sqrt{L} \,, 
\;\quad \ell=1,\ldots,n'\} \,, 
\end{align*}
which is measurable with respect to the $\sigma$-algebra
$\cF := \sigma(Z(s_{3\ell-1}),1 \le \ell \le n')$.
Indeed,
\begin{align}
\P(\sup_{\ell=1}^{n'}
\sup_{t\in \overline{\cJ}_{3\ell}}\{Z(t)\}<0)
 \ge\E\Big[ 
 \P{\Big(\bigcap_{\ell=1}^{n'}\sup_{t\in \overline{\cJ}_{3\ell}}\, \{Z (t) \} <0} \
|\cF \Big)\; 1_{\Gamma} \, \Big] 
\label{not_so_clever}
\end{align}
and proceeding to bound the \abbr{RHS} of (\ref{not_so_clever}), let 
$m(t)$ denote the conditional mean of $Z(t)$ given $\cF$. 
We claim that for some choice of $\lambda=\lambda_2$ and 
$C_2=C_2(\lambda)>0$ one has that 
\begin{align}\label{mean_estimate_final}
\Gamma \qquad \Longrightarrow \qquad \qquad \qquad 
-\sqrt{C_2 L} & \ge \sup_{t \in \cJ^0} \, \{ m(t) \} \,,
\\
{\sf Cov}(Z(u),Z(v)|\cF) & \ge 0 \,.
\label{eq:cov_positive}
\end{align} 
We next complete the proof of \eqref{claim:lower_bound} assuming 
both \eqref{mean_estimate_final} and \eqref{eq:cov_positive} hold
(deferring to Step III the proof of the latter estimates). Indeed,
by Slepian's lemma it then follows that  
 \begin{align}\label{not_so_clever+slepian}
 \P\Big(\bigcap_{\ell=1}^{n'}\sup_{t\in \overline{\cJ}_{3\ell}}\{Z(t)\}<0|\cF\Big)
 1_{\Gamma} &\ge 
\prod_{\ell=1}^{n'}\P\Big(\sup_{t\in \overline{\cJ}_{3\ell}}\{Z(t)\}< 0 |\cF\Big)
1_{\Gamma} \nonumber \\ 
&\ge 
\prod_{\ell=1}^{n'}\P\Big(\sup_{t\in \overline{\cJ}_{3\ell}}\{Z(t)-m(t)\}<\sqrt{C_2 L}\,|\cF\Big) 1_{\Gamma} \,.
 \end{align}
Conditional on $\cF$, the centered 
normal random variable $Y_{u,v}:=Z(u)-m(u)-Z(v)+m(v)$ has
variance $\E[Y_{u,v}^2| \cF] \le \E[(Z(u)-Z(v))^2]$.
Thus, by the Sudakov-Fernique inequality (Theorem \ref{thm:sud}), for any $s,u>0$,
a.s. 
$$
\E \big[ \sup_{t\in[s,s+u]}\{Z(t)-m(t)\}|\cF \big] \le 
\E \big[ \sup_{t\in [s,s+u]}\{Z(t)\} \big] \,.
$$
Thus, from \eqref{need1} there exist $u_0>0$ and $K<\infty$
(independent of $T$ and $M$), such that a.s.
$$
\sup_{s\ge 0}\E[ \sup_{t\in[s,s+u_0]}\{Z(t)-m(t)\}|\cF] \le K \,.
$$
Upon covering $\cJ_{3\ell}$ by intervals of length $u_0$, in each of 
which we apply the Borell-TIS inequality (Theorem \ref{thm:borell-tis}),
for the conditional Gaussian centered process $\{Z(t)-m(t)\}$ of 
maximal variance one, we get by a union bound that a.s.
\begin{align*}
\P(\sup_{t\in \cJ_{3\ell}}\{Z(t)-m(t)\} \ge \sqrt{C_2 L}\,|\cF) 
&\le \lceil M/u_0 \rceil \sup_{s \ge 0}\P(\sup_{t\in[s,s+u_0]}\{Z(t)-m(t)\} \ge \sqrt{C_2 L}\,|\cF)\\
&\le \frac{2 M}{u_0} \exp\big\{-\frac{1}{2} (\sqrt{C_2 L}-K)^2\big\} \,.
\end{align*}
With $M = \lambda_2 I_\rho(T)$, $L=\beta \log I_\rho(T)$ and $I_\rho(T) \uparrow \infty$,
upon taking $\beta>2/C_2$ the \abbr{rhs} is bounded by $1/2$ for all $T$ large enough.
In this case, we deduce from \eqref{not_so_clever} and \eqref{not_so_clever+slepian} 
that
\begin{align*}  
 \P(\sup_{t\in \cJ^0} \{Z(t)\}<0) \ge 2^{-n'} \P(\Gamma)
& = 2^{-n'} \P\Big(\bigcap_{\ell=1}^{n'} 
\{\frac{3}{4} \sqrt{L} < Z(s_{3\ell-1}) < \sqrt{L} \} \Big) \\
&\ge 12^{-n'/2} \P\big(\frac{3}{4} \sqrt{2L} < Z < \sqrt{2L}\big)^{n'},
\end{align*}
where noting that by \eqref{mean_estimate} all eigenvalues 
of the covariance matrix ${\bf I} + \bDel$ are within $[1/2,3/2]$, 
the last inequality follows by the same argument employed in 
\cite[display following (2.5)]{DM}.
Considering the limit as $T\rightarrow\infty$ 
of $-a_\rho(T)^{-1} \log$ of both sides, results with 
\begin{align*}
\liminf_{T\rightarrow\infty}\frac{1}{a_\rho(T)}\log \P(\sup_{t\in\cJ^0}\{Z(t)\}<0)\ge -\frac{(1-r)\beta}{16\lambda_2}\,,
\end{align*}
thereby establishing \eqref{claim:lower_bound}, and consequently \eqref{u1}.

\smallskip
\noindent
{\bf Step III.} It remains only to establish
\eqref{mean_estimate_final} and \eqref{eq:cov_positive}.
To this end, setting $\bDel$ the $n'$-dimensional matrix of non-negative entries
$$
\Delta(\ell,\ell'):={\sf Cov}(Z(s_{3\ell-1}),Z(s_{3\ell'-1}))\text{ for }1\le \ell\ne \ell'\le n', \quad \Delta(\ell,\ell):=0,
$$ 
we claim that there exists $\lambda_2$ such that for all $\lambda\ge \lambda_2$ the following 
estimates hold simultaneously:
\begin{align}
\max\Big\{\max_{\ell=1}^{n'}\sum_{\ell'=1}^{n'}\Delta(\ell,\ell'),\sup_{t\in \cJ^0}\sum_{\ell=1}^{n'}A(t,s_{3\ell-1})\Big\}\leq&\frac{1}{2}, 
\label{mean_estimate}\\
\max_{a, b =1, a \ne b}^{n'}\sup_{v\in \overline{\cJ}_{3a}, u \in \overline{\cJ}_{3b}}\, \frac{1}{A(u,v)}\sum_{\ell=1}^{n'}A(u,s_{3\ell-1})A(v,s_{3\ell-1})\le &\frac{1}{2},
\label{cov_estimate_blocks}\\
\max_{a,b=1}^{n'}\sup_{v\in\overline{\cJ}_{3a}}\frac{1}{A(v,s_{3b-1})}\sum_{\ell=1}^{n'}\Delta(b,\ell) A(v,s_{3\ell-1}) \leq&\frac{1}{2}\,.
\label{cov_estimate_endpoints}
\end{align}
While deferring the proof of \eqref{mean_estimate}--\eqref{cov_estimate_endpoints} to Step IV, we fix hereafter $\lambda=\lambda_2$
and rely on these bounds to establish \eqref{claim:lower_bound}. Indeed, setting the 
vectors
$$
\by(t)
:=[A(t,s_{3\ell-1}),1\le \ell\le n'],\qquad \bz:=[Z(s_{3\ell-1}),1\le \ell\le n']$$ and utilizing Theorem \ref{thm:conditional},
the conditional mean of $Z(t)$ given $\cF$ is 
\begin{align*}
m(t)= &\langle \by(t), ({\bf I}+\bDel)^{-1}\bz \rangle
 =\sum_{k=0}^\infty \langle \by(t) , \Delta^{2k}({\bf I}-\bDel) \bz \rangle
 \end{align*}
where the expansion as a power series requires that the operator norm of $\bDel$ is less than $1$. To verify this, recall that the operator norm of a symmetric matrix is bounded by the maximum row sum, which coupled with
\eqref{mean_estimate} gives 
$\|\bDel {\bf 1}\|_\infty \le 1/2$.  In view of the latter bound on $\bDel$,
the event $\Gamma$ implies that 
\begin{align*}
\langle \by(t), \bDel^{2k}({\bf I}-\bDel) \bz \rangle \le -\sqrt{L}
\langle \by(t) , \bDel^{2k}\Big[\frac{3}{4}{\bf 1}-\bDel {\bf 1}\Big] \rangle \le 
-\frac{\sqrt{L}}{4} \langle \by(t), \bDel^{2k}{\bf 1} \rangle \,.
 \end{align*}
In particular, this is negative for any $k\ge 1$, hence under $\Gamma$,
\begin{align}\label{eq:new_added_1}
m(t) \le \langle \by(t), ({\bf I}-\bDel) \bz \rangle 
\le -\frac{\sqrt{L}}{4} \langle \by(t), {\bf 1} \rangle 
= - \frac{\sqrt{L}}{4}
\sum_{\ell=1}^{n'}A(t,s_{3\ell-1}) \,.
\end{align}
Further, recall that for any $t \in \cJ^0$ the elements of  
$\{|t-s_{3\ell-1}|/M\}$ are of 
the form $\{\theta + 3 \Z \}$, for 
some $\theta=\theta(t) \in [1,2]$. Hence, with $\rho(\cdot)$ and $I_\rho(\cdot)$
regularly varying, by \eqref{l00} and \eqref{eq:lem-2.1} (for 
$3M/I_\rho((1-r)T) \to \mu = c \lambda_2$), 
\begin{equation}\label{eq:ubd-At}
\inf_{t\in \cJ^0} \, \sum_{\ell=1}^{n'}A(t,s_{3l-1})\gtrsim \, \inf_{\theta \in [1,2]} 
\{ \sum_{k=0}^{n'-1}\rho((\theta + 3k) M) \} \gtrsim \frac{1}{\lambda_2}\,.
\end{equation}
Combining \eqref{eq:new_added_1} and \eqref{eq:ubd-At}, we get 
the existence of  $C_2$ independent of $T$ and $L$ for which\eqref{mean_estimate_final} holds.
Proceeding to bound the covariance of the conditional process across blocks, we set
$$
a_k(u,v):=\sum_{\ell,\ell'=1}^{n'}A(u,s_{3\ell-1})\bDel^{k}(\ell,\ell')A(v,s_{3\ell'-1}),
$$
for $u\in \cJ_{3j},v\in\cJ_{3j'}$ with $1\le j\neq j' \le n'$, and use Theorem \ref{thm:conditional} to note that
\begin{align}
 {\sf Cov}(Z(u),Z(v)|\cF)=A(u,v)-\sum_{k=0}^\infty (-1)^ka_k(u,v)
\ge A(u,v)-\sum_{k=0}^\infty a_{2k}(u,v),
\label{cov_estimate_2}
\end{align}
where $\bDel^0:={\bf I}$. Note that by \eqref{cov_estimate_endpoints}, for any $k \ge 1$,
\begin{align*}
a_k(u,v)
      =&\sum_{\ell,\ell''=1}^{n'}A(u,s_{3\ell-1})\bDel^{k-1}(\ell,\ell'')\sum_{\ell'=1}^{n'}\Delta(\ell'',\ell')A(v,s_{3\ell'-1})\\
      \le &\frac{1}{2}\sum_{\ell,\ell'=1}^{n'}A(u,s_{3\ell-1})\bDel^{k-1}(\ell,\ell'')A(v,s_{3\ell''-1})=\frac{1}{2}a_{k-1}(u,v) 
\end{align*}
and consequently, for any $k \ge 0$, by \eqref{cov_estimate_blocks},
\begin{align}\label{cov_estimate_3}
a_k(u,v)\le \Big(\frac{1}{2}\Big)^{k}a_0(u,v)\le \Big(\frac{1}{2}\Big)^{k+1}A(u,v) \,.
\end{align} 
Combining \eqref{cov_estimate_2} and \eqref{cov_estimate_3} we deduce that 
\begin{align*}
{\sf Cov}(Z(u),Z(v)|\cF)\ge A(u,v)
\Big[1-\sum_{k=0}^\infty \Big(\frac{1}{2}\Big)^{2k+1}\Big] \ge 0 \,.
\end{align*}
thus verifying \eqref{eq:cov_positive} as well.
	
	
\smallskip
\noindent
{\bf Step IV.} In proving \eqref{mean_estimate}-\eqref{cov_estimate_endpoints} we repeatedly use properties 
of regularly varying functions, and in particular, having 
$\alpha>0$, assume hereafter \abbr{wlog} that $\rho(\cdot)$ is 
eventually non-increasing (see Remark \ref{bgtslow}). Starting 
with \eqref{mean_estimate}, note that by the same argument 
used for deriving \eqref{eq:ubd-At},
\begin{align*}
\sup_{t\in \cJ^0} \, \big\{ \sum_{\ell=1}^{n'}A(t,s_{3\ell-1}) \big\}
\, \lesssim \sup_{\theta \in [1,2]} \, \{
\sum_{k=0}^{n'-1}\rho((\theta+3k)M) \} \, \lesssim \frac{1}{\lambda} \,.
\end{align*}
The same calculation shows that
$$
\max_{\ell=1}^{n'}\sum_{\ell'=1}^{n'}\Delta(\ell,\ell')
\lesssim \sum_{k=1}^{n'}\rho(3 k M) \lesssim \frac{1}{\lambda} \,,
$$
so choosing $\lambda$ large enough guarantees that \eqref{mean_estimate} holds.
Next, in view of \eqref{u00}, \eqref{l00} and 
having $\rho(\cdot)$ regularly varying and eventually non-increasing, the \abbr{lhs}
of \eqref{cov_estimate_blocks} and \eqref{cov_estimate_endpoints} are both
bounded up to a universal constant multiplicative factor, by 
$$
\rho(M) + \max_{1 \le a \le b \le n'} \{ R_{[1,n']} \} \,,
$$ 
where setting $I_1=[1,a-1]$, $I_2=[a+1,(a+b)/2]$, $I_3=[(a+b)/2,b-1]$, $I_4=[b+1,n']$,
\begin{align*}
R_{[1,n']} := & 
\sum_{\ell \neq a,b}^{n'} \,
\frac{\rho(|s_\ell-s_a|)\rho(|s_\ell-s_b|)}{\rho(|s_a-s_b|)} 
= \sum_{i=1}^4 R_{I_i} \,,
\end{align*}
and $R_{I_i}$ corresponds to the sum over $\ell \in I_i$. 
It thus suffices to show that $\max_{i,a,b} R_{I_i} \lesssim 1/\lambda$ 
(so choosing $\lambda$ large 
enough guarantees that also \eqref{cov_estimate_blocks} and 
\eqref{cov_estimate_endpoints} hold). Now with 
$\rho(\cdot)$ eventually non-increasing, we have that for $M$ large enough and 
all $a \le b$, 
$$
R_{I_1} \le \sum_{\ell=1}^{a-1} \rho(s_a-s_\ell)\,, \qquad
R_{I_4} \le \sum_{\ell=b+1}^{n'} \rho(s_\ell-s_b) \,.
$$ 
Further, $I_2$ and $I_3$ are empty unless $a<b$, in which case
$s_a^b:=s_b-s_a \ge M$ and
\begin{align*}
R_{I_2} &=  \sum_{\ell= a+1}^{(a+b)/2}
\frac{\rho(s_b-s_\ell)}{\rho(s_a^b)}\rho(s_\ell-s_a) 
\le 
\sup_{\theta \in [1/2,1]} \Big\{ \frac{\rho(\theta s^b_a)}{\rho(s^b_a)} \Big\} 
\sum_{\ell=a+1}^{(a+b)/2} \rho(s_\ell-s_a) \le C \sum_{\ell=a+1}^{(a+b)/2} 
\rho(s_\ell-s_a) \,,
\end{align*}	        
while by the same reasoning also
$$
R_{I_3} = \sum_{\ell>(a+b)/2}^{b-1}
\frac{\rho(s_\ell-s_a)}{\rho(s_a^b)}\rho(s_b-s_\ell)
\le C \sum_{\ell>(a+b)/2}^{b-1} \rho(s_b - s_\ell) \,.
$$
Combining the latter four bounds, we conclude that 
$$
\max_{i,a,b} \{R_{I_i}\} \le C \sum_{\ell=1}^{n'} \rho(\ell M) \lesssim \frac{1}{\lambda}
$$
as claimed.
\end{proof}

\begin{proof}[Proof of Proposition \ref{decay_slowly_varying}] 
The bulk of the proof of Theorem \ref{gen2} dealt with $\sup_{t \in [rT,T]} \{Z(t)\}$
for some fixed $r \in (0,1)$. This part of the proof applies even for $\rho \in \cR_0$,
under our extra assumptions that $\rho(\cdot)$ is eventually non-increasing and 
decays to $0$ at $\infty$. Thus, for some $r\in (0,1)$
\begin{align}\label{eq:r-bd}
-\infty < &\liminf_{T\rightarrow\infty}\frac{1}{a_\rho(T)}\log \P (\sup_{t \in [rT,T]}\{Z(t)\}<0) \nonumber \\
&\le \limsup_{T\rightarrow\infty}\frac{1}{a_\rho(T)}\log \P (\sup_{t \in [rT,T]}\{Z(t)\}<0)<0 \,,
\end{align}
from which the \abbr{lhs} of \eqref{eq:decay_slowly_varying} trivially follows. 
As for the \abbr{rhs} of \eqref{eq:decay_slowly_varying}, the derivation of 
\eqref{eq:lbd-no-r} remains valid here, leading to 
$$
\liminf_{T\rightarrow\infty}\frac{1}{\wh{a}_\rho(T)}\log P(\sup_{t\in [0,T]}\{Z(t)\}<0)
>-\infty,
$$
for $\wh{a}_\rho(T)$ of \eqref{eq:wh-a-rho}. 
Since $T\mapsto \rho(T)$ is non-decreasing, the map $T \mapsto a_\rho(T)$ is differentiable a.e., with
$$
T I_\rho(T) \frac{d \log a_\rho(T)}{dT} \ge \int_0^T (\rho(x)-\rho(T)) dx \,.
$$
The \abbr{rhs} is eventually non-negative due to 
our assumption that the positive and eventually non-increasing $\rho(T)$ decreases to zero 
as $T \to \infty$. Thus, the slowly varying 
$a_\rho(\cdot)$ is eventually non-decreasing, resulting 
for large enough $T_1$ with 
$$
\wh{a}_\rho(T) = \sum_{i=1}^m a_\rho(T_i) \le m a_\rho(T_m) \lesssim a_\rho(T)\log T\,,
$$
thereby completing the proof.
\end{proof}

\begin{proof}[Proof of Lemma \ref{clm2}] (a). Recall that 
$I_\rho(b)-I_\rho(a)=\int_a^b x^{-\alpha} L_\rho(x) dx$. 
Let $\delta:=a/b$
and $\mu_\alpha$ denote the probability measure on $[0,1]$ 
of density $(1-\alpha) y^{-\alpha}$.
The change of variable $x=yb$ transforms 
our claim \eqref{eq:slow_estimate<1} to 
\begin{equation}\label{eq:Fb-limit}
\lim\limits_{b\rightarrow\infty} \sup_{\delta \in [0,1)}\Big|
\frac{\int_{\delta}^1 \big(\frac{L_\rho(y b)}{L_\rho(b)} - 1 \big) d\mu_\alpha(y)}
{\mu_\alpha([\delta,1])} \Big|= 0 \,.
\end{equation}
For bounded below $\delta>0$ this 
follows from the uniformity 
of the convergence $L_\rho(y b)/L_\rho(b) \to 1$,
w.r.t. $y$ in a compact subset of $(0,1]$
(see Remark \ref{bgtslow}).
Further, $\mu_\alpha([0,\delta])=\delta^{1-\alpha} \to 0$ as
$\delta \to 0$, so fixing $0<\eta<1-\alpha$, it suffices 
to show that for some $b_0$ and $\kappa$ finite, 
all $b \ge b_0$ and any $\delta \in (0,1]$,
\begin{equation}\label{eq:small-y}
\int_0^\delta \frac{L_\rho(y b)}{L_\rho(b)} d\mu_\alpha(y) \le 
\kappa \delta^{1-\alpha-\eta} \,.
\end{equation}
Indeed, recall Remark \ref{bgtslow} on 
existence of $K$ finite, such that 
$L_\rho(yb) \le y^{-\eta} L_\rho(b)$ whenever $b \ge yb \ge K$. 
Hence, with $\int_0^\delta y^{-\eta} d\mu_\alpha(y) = c \delta^{1-\alpha-\eta}$
for some $c=c(\alpha,\eta)$ finite, we only need to consider the 
contribution of $y \le \delta \wedge K/b$ to the 
\abbr{lhs} of \eqref{eq:small-y}. Since $\rho \in \cR_\alpha$ is $(0,1]$-valued, 
the latter is at most 
$(\delta \wedge K/b)/\rho(b)$ which for $b \ge b_0$ is further bounded
by $\delta (1 \wedge K/(\delta b)) b^{\alpha +\eta}$, so the elementary 
inequality $(1 \wedge x) \le x^{\alpha + \eta}$ yields \eqref{eq:small-y}. 

\noindent
(b). For $\alpha>1$ taking as $\mu_\alpha$ the probability measure on 
$[1,\infty)$ of density $(\alpha-1) y^{-\alpha}$, the same change of 
variable as in part (a), transforms \eqref{eq:slow_estimate>1} into 
$$
\lim\limits_{b\rightarrow\infty} \sup_{\delta \in (1,\infty)}\Big|
\frac{\int_1^{\delta} \big(\frac{L_\rho(y b)}{L_\rho(b)} - 1 \big) d\mu_\alpha(y)}
{\mu_\alpha([1,\delta])} \Big|= 0 \,.
$$
As in part (a), for bounded above $\delta$ this trivially 
follows from the uniform convergence $L_\rho(yb)/L_\rho(b) \to 1$, and 
since $\mu_\alpha([\delta,\infty))=\delta^{1-\alpha} \to
0$ as $\delta \to \infty$, it suffices to show that 
\begin{equation}\label{eq:large-y}
\lim_{\delta \uparrow \infty} \limsup_{b \to \infty} 
\int_\delta^\infty \frac{L_\rho(y b)}{L_\rho(b)} d\mu_\alpha(y) = 0 \,.
\end{equation}
To this end, we fix $0<\eta<\alpha-1$ and recall that $L_\rho(yb) \le y^{\eta} L_\rho(b)$
whenever $y b \ge b \ge K$. Since $\int_\delta^\infty y^{\eta} d\mu_\alpha(y) \to 0$
for $\delta \to \infty$, this completes the proof of \eqref{eq:large-y} and of the lemma.
\end{proof}

\begin{proof}[Proof of Lemma \ref{slow}]
(a). In case $\alpha\in [0,1)$ it follows by \eqref{eq:slow_estimate<1} (for $a=0$), 
that
\begin{equation}\label{eq:lim-al<1}
\lim_{b\rightarrow\infty}\frac{I_\rho(b)}{b\rho(b)}=\frac{1}{1-\alpha}
\end{equation}
and consequently $I_\rho(\cdot)$ is regularly varying of order $1-\alpha$. 
Turning to show that the increasing function $I_\rho(\cdot)$ is slowly varying 
when $\alpha=1$, it suffices to show that for any $\lambda>1$,
\begin{align}\label{slow_right_order}
\limsup_{T\rightarrow\infty}\Big\{ \frac{I_\rho(\lambda T)-I_\rho(T)}{I_\rho(T)}
\Big\} \le 0.
\end{align}
To this end, fixing $\delta \in (0,1)$, we have that
\begin{align}\label{slow_lower_bound}
I_\rho(T)\ge \int_{\delta T}^{T}\frac{L_\rho(x)}{x}dx\ge 
\log(1/\delta) \inf_{x\in [\delta T,T]} 
\{ L_{\rho}(x) \} \,,
\end{align}
whereas
\begin{align}\label{slow_upper_bound}
I_\rho(\lambda T)-I_\rho(T)=\int_T^{\lambda T}\frac{L_\rho(x)}{x}dx\le 
\log \lambda \, \sup_{x\in [T,\lambda T]} \{L_{\rho}(x) \} \,.
\end{align}
Dividing \eqref{slow_upper_bound} by \eqref{slow_lower_bound} and taking 
$T\rightarrow\infty$, we arrive at the bound
$$
\limsup_{T\rightarrow\infty}
\Big\{ 
\frac{I_\rho(\lambda T)-I_\rho(T)}{I_\rho(T)} \Big\} 
\le \frac{\log \lambda }{\log (1/\delta)} \,.
$$
Taking now $\delta\rightarrow 0$ yields 
\eqref{slow_right_order} and thereby that $I_\rho(\cdot)$ is slowly varying. 
Finally, since $I_\rho(\infty)<\infty$ when $\alpha>1$, the function $I_\rho(\cdot)$ 
is then (trivially) slowly varying at $\infty$.\\

Proceeding to establish \eqref{eq:lem-2.1_lower_bound}, by the regular variation 
of $\rho(\cdot)$ we have that for any $M > 0$, 
$$
\sup_{\ell \ge 1} \Big\{ 
\frac{M\rho(\ell M)}{\int_{\ell M}^{(\ell+1)M}\rho(t)dt} \Big\} 
\le \sup_{x \ge M} \sup_{\theta \in [1,2]} \Big\{ \frac{\rho(x)}{\rho(\theta x)} \Big\} 
=:\kappa (M) \,, 
$$
with $\kappa(\cdot)$ non-increasing, hence uniformly bounded by universal 
$\kappa_\star$ finite 
(on some $[M_0,\infty)$). Consequently, for any $M \ge M_0$ and $n \ge 1$,
\begin{equation}
\label{eq:univ-bd}
M\sum_{\ell=1}^{n}\rho(\ell M) \le \kappa_\star [I_\rho((n+1)M)-I_\rho(M)] \,, 
\end{equation}
and \eqref{eq:lem-2.1_lower_bound} follows by the regular variation 
of $I_\rho(\cdot)$. If $I_\rho(\infty)<\infty$, the \abbr{rhs} of 
\eqref{eq:univ-bd} goes to zero when $n \to \infty$ followed by $M \to \infty$,
thus yielding \eqref{eq:lem-2.1_lower_bound_new}. 
 
\smallskip
\noindent
(b). Fixing $\epsilon>0$, by the regular variation of $\rho$ 
we have that for any $M \ge M_\epsilon$ and $\ell\ge K_\epsilon$
$$
\Big|\frac{\int_{(\ell-1) M}^{\ell M}\rho(t)dt}{M\rho(\ell M)}-1\Big|\le \sup_{\lambda\in [1-1/\ell,1]}\Big|\frac{\rho(\lambda \ell M)}{\rho(\ell M)}-1\Big|\le \epsilon \,.
$$
Thus, setting $n:=\lceil T/M\rceil$ we have for any $M \ge M_\epsilon$, 
\begin{equation}\label{eq:ubd-mu}
I_\rho(T) \le \sum_{\ell=1}^{n}\int_{(\ell-1) M}^{\ell M} \rho(t)dt \le 
I_\rho(K_\epsilon M) + (1+\epsilon) M \sum\limits_{\ell=1}^n \rho(\ell M) \,.
\end{equation}
If $\alpha\in (0,1]$ then the regularly varying $I_\rho(\cdot)$ has
order $1-\alpha<1$, hence $I_{\rho}(KM)/M \to 0$ when $M \to \infty$
and $K$ is fixed. From \eqref{eq:lim-al<1} the same holds even
for $\alpha=0$, provided $\rho(x) \to 0$. Note that when $\alpha<1$ 
necessarily $I_\rho(\cdot)$ diverges, and our hypothesis extends 
this conclusion to the case of $\alpha=1$. Thus, fixing $\mu>0$ we 
have that $M(T)=\mu I_\rho(T) \to \infty$ when $T \to \infty$. In 
particular, dividing both sides of \eqref{eq:ubd-mu} by 
$M = \mu I_\rho(T)$, then taking $T \to \infty$, yields
$$
\frac{1}{\mu} \le (1+\epsilon) 
\liminf_{T\rightarrow\infty}{\sum\limits_{\ell=1}^n \rho(\ell M)}\,.
$$
Taking now $\epsilon \downarrow 0$ establishes the lower bound of \eqref{eq:lem-2.1}. 
The same reasoning we have used in deriving \eqref{eq:ubd-mu}, leads also to  
\begin{equation}\label{eq:lbd-mu}
I_\rho(n M) \ge I_\rho(K_\epsilon M) + (1-\epsilon) M \sum_{\ell=K_\epsilon+1}^n
\rho(\ell M) \,.
\end{equation}
We divide both sides by $M=\mu I_\rho(T)$, then take $T \to \infty$ followed 
by $\epsilon \downarrow 0$. This in turn results with the corresponding 
upper bound of \eqref{eq:lem-2.1}, since by \eqref{eq:univ-bd}, upon 
fixing $K<\infty$,
$$
\limsup_{M \to \infty} \sum_{\ell=1}^K \rho(\ell M) \le \kappa_\star 
\limsup_{M \to \infty} \frac{I_{\rho}((K+1)M)}{M} = 0 \,,
$$
while by \eqref{eq:lim-al<1}, $n \ge \mu^{-1} T/I_\rho(T) \to \infty$ and hence 
$$
1 \le \frac{I_\rho(nM)}{I_\rho(T)} \le \frac{I_\rho(n M)}{I_\rho((n-1) M)} 
$$ 
where the \abbr{rhs} converges to $1$ when $T \to \infty$,
due to the regular variation of $I_\rho(\cdot)$.
\end{proof}

\begin{proof}[Proof of Lemma \ref{easy}]
By (\ref{nonsum2}) we have the existence of $C<\infty$ and $\eta\in (0,1)$ 
such that for any $\tau_0$ large enough,
\begin{equation}\label{eq:A-univ-bd}
\sup_{\tau \ge \tau_0} \sup_{t \ge \tau/\eta} A(t,t+\tau) \le C \rho(\tau_0)\,.
\end{equation}
Fixing $\epsilon>0$ we take $\tau_0$ large enough to assure that 
$C \rho(\tau_0) \le \epsilon$ (which is always possible since
$\rho(\tau) \rightarrow 0$ when $\tau\rightarrow\infty$). 
For such $\tau_0$ and $n := \lceil M /\tau_0 \rceil$ we set 
$$
t_i:=s+(i-1)\tau_0 \in [s,s+M] \,, \;\; i=1,\ldots,n \,,
$$ 
noting that if $s \ge M/\eta$ then for any $1 \le i,j \le n$,
$$
t_i \ge s \ge \frac{M}{\eta} \ge \frac{(n-1) \tau_0}{\eta} \ge \frac{|t_i-t_j|}{\eta} 
$$ 
and consequently, by \eqref{eq:A-univ-bd}
$$
\E [Z(t_i) Z(t_j)] = A(t_i,t_j) \le C \rho(\tau_0) \le \epsilon \;.
$$
By Slepian's lemma and the union bound, 
we then have for i.i.d. standard normal $\{X_i\}_{i=0}^n$,
any $r \in \R$, $s \ge M/\eta$ and $\epsilon < 5/9$,
\begin{align}
\P(\sup_{t\in[s,s+M]}\{Z(t)\}<r) &\le 
\P(\sup_{i=1}^n \{Z(t_i) \} <r) \le 
\P(\sup_{i=1}^n \{\sqrt{1-\epsilon} X_i + \sqrt{\epsilon} X_0 \} < r)  \nonumber \\
&\le \P\big(X_0<- r \epsilon^{-1/2}\big)
+\P\big( X_1 < 3 r\big)^n \,.
\label{nn2_new}
\end{align} 
Setting $r=\sqrt{\delta \log M}$ we note that for $\delta<0.1$ and all $M$ large enough
$$
\P(X_1 < 3 r)^n \le e^{-n \P(X_1 \ge 3r)} \le e^{-\sqrt{M}} \,.
$$
Thus, 
from \eqref{nn2_new} we deduce that 
$$
\limsup_{M\rightarrow\infty}\frac{1}{\log M} 
\sup_{s \ge M/\eta}
\log\P(\sup_{t\in[s,s+M]}\{Z(t)\}<\sqrt{\delta\log M}) \le 
-\frac{\delta}{2\epsilon} 
$$
and taking $\epsilon \downarrow 0$ results with the desired conclusion \eqref{eq:lem-2.2}. 
\end{proof}

\section{Proof of Theorem \ref{all}}\label{sec-3}

Part (c) of Theorem \ref{all} relies on Theorem \ref{gen2} whereas
parts (a) and (b) follow from \cite[Theorem 1.6]{DM}. For the 
latter task we 
extend the scope of \cite[Lemma 1.8]{DM} 
to non-stationary $A_k(\cdot,\cdot)$ and, for fully handling the $\gamma=2$ 
case, relax the uniform correlation tail decay requirement of \cite[(1.15)]{DM}.
\begin{lem}\label{clm3}
Suppose $\{Z^{(k)}_t\}$, for $1\le k\le \infty$, are centered Gaussian processes 
on $[0,\infty)$, of {non-negative} covariance $A_k$ 
normalized to have $A_k(s,s)=1$ for all $s\ge 0$, such that  
$Z^{(\infty)}_t$ is a stationary process 
and $A_k(s,s+\tau) \to A_\infty(0,\tau)$ when $k \to \infty$, 
uniformly in $s \ge 0$. Suppose
\begin{equation}\label{eq:weak-115}
\limsup_{k,\tau\rightarrow\infty} \sup_{s\ge 0} \Big\{ 
\frac{A_k(s,s+\tau)}{\wt{\rho}(\tau)} \Big\} < \infty
\end{equation}
for some integrable $\wt{\rho} \in \cR_\alpha$, $\alpha \ge 1$, and in addition 
{$A_\infty(0,\tau)$ is non-increasing}, such that
\begin{align}\label{eq:dm-120}
a^2_{h,\theta}:=\inf_{0<t\le h}\left\{\frac{A_\infty(0,\theta t)-A_\infty(0,t)}{1-A_\infty(0,t)}\right\}>0,
\end{align}
and there exists $\eta>1$ such that
\begin{align}\label{eq:dm-123}
\limsup_{u\downarrow0}|\log u|^{\eta}\sup_{1\le k\le \infty,s\ge 0,\tau \in [0,u]}(1-A_k(s,s+\tau))<\infty.
\end{align}
Then we have
\begin{equation}\label{eq:dm-118}
- \lim_{k,T \to \infty} \frac{1}{T}\log \P(\sup_{t\in[0,T]}\{Z^{(k)}_t\}<0)
= b(A_\infty) \,.
\end{equation}
\end{lem}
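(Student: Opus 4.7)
The approach is the block decomposition familiar from the proof of Theorem \ref{gen2}: write $[0,T]$ as a union of blocks $I_i:=[s_i,s_i+T_0]$ separated by gaps of length $\tau_0$ (with $s_i:=(i-1)(T_0+\tau_0)$ and $N:=\lfloor T/(T_0+\tau_0)\rfloor$), and exploit two facts throughout. First, the uniform convergence $A_k(s,s+\tau)\to A_\infty(0,\tau)$ means the law of $Z^{(k)}$ restricted to any $I_i$ is close to that of $Z^{(\infty)}$ on $[0,T_0]$. Second, \eqref{eq:weak-115} together with integrability of $\wt\rho\in\cR_\alpha$, $\alpha\ge1$, forces cross-block interactions to be negligible. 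As a preliminary step I would verify that $b(A_\infty)\in(0,\infty)$: existence in $[0,\infty]$ follows from sub-additivity plus Slepian, positivity is clear from Gaussian upper tail bounds, and $b(A_\infty)<\infty$ comes from rerunning Step I of the proof of Theorem \ref{gen2}(a) on $Z^{(\infty)}$ with the integrable $\wt\rho$ in place of $\rho$; in particular, \eqref{eq:lem-2.1_lower_bound_new} allows one to fix $\tau_0$ large enough that row sums of the coupling matrix are arbitrarily small, yielding genuine exponential decay for the stationary limit.

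For $\limsup_T(-T^{-1}\log\P(\sup_{[0,T]}\{Z^{(k)}_t\}<0))\le b(A_\infty)$, non-negativity of $A_k$ and Slepian give
\[
\P\bigl(\sup_{[0,T]}\{Z^{(k)}_t\}<0\bigr)\ge\prod_{i=1}^{N}\P\bigl(\sup_{I_i}\{Z^{(k)}_t\}<0\bigr).
\]
The key sub-claim is the uniform-in-$s$ convergence $\P(\sup_{[s,s+T_0]}\{Z^{(k)}_t\}<0)\to\P(\sup_{[0,T_0]}\{Z^{(\infty)}_t\}<0)$ as $k\to\infty$, which I would obtain from uniform convergence of the covariances, the uniform modulus of continuity furnished by \eqref{eq:dm-123} via Borell--TIS, and the anti-concentration \eqref{eq:dm-120} ruling out mass on the boundary of $\{\sup<0\}$. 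Letting $k,T\to\infty$ and then $T_0\to\infty$ gives $-T^{-1}\log\P\le(b(A_\infty)+\epsilon)T_0/(T_0+\tau_0)\to b(A_\infty)$. For the reverse inequality, I restrict to $\bigcup_iI_i$ and run the Step I block-Slepian argument of Theorem \ref{gen2}(a) with auxiliary coupling covariance $B(i,j)\lesssim\wt\rho(|i-j|\tau_0)$: \eqref{eq:lem-2.1_lower_bound_new} permits choosing $\tau_0$ so that $\sup_i\sum_{j\ne i}B(i,j)\le 1/2$, whence Gershgorin places all principal sub-matrix eigenvalues of $B$ in $[1/2,3/2]$. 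This yields $\P(\sup_{\bigcup_iI_i}\{Z^{(k)}_t\}<0)\le 2^N\sup_i\P(\sup_{I_i}\{Z^{(k)}_t\}<0)^N$, and the same uniform single-block convergence bounds each factor by $e^{-(b(A_\infty)-\epsilon)T_0}$ for $k$ large, completing the proof upon sending $T_0\to\infty$.

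The main obstacle is establishing the uniform-in-$s$ convergence of single-block persistence probabilities, which must work under the weakened, non-stationary tail hypothesis \eqref{eq:weak-115} (rather than the stronger \cite[(1.15)]{DM}); this forces \eqref{eq:dm-123} and \eqref{eq:dm-120} to be invoked precisely at the step where convergence in distribution of the supremum on a compact interval is upgraded to convergence of the persistence probability. A secondary subtlety is verifying the spectral bound on the cross-block covariance $B$ using only integrability of $\wt\rho$ and the regular-variation tools of Lemma \ref{slow}, rather than the quantitative polynomial decay that was available in \cite{DM}.
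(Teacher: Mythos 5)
Your overall block-decomposition strategy is the right one and parallels the paper's, but there is a genuine gap in the reverse inequality, and it traces to a misreading of what condition \eqref{eq:dm-120} is for.

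In your upper bound on the persistence probability you claim that the Slepian--Gershgorin decoupling yields
$\P(\sup_{\bigcup_i I_i}\{Z^{(k)}_t\}<0)\le 2^N\sup_i\P(\sup_{I_i}\{Z^{(k)}_t\}<0)^N$.
This is not what the argument gives. The comparison \eqref{u3}--\eqref{eq:bd-supJ} replaces the process by an independent-block copy $\overline Z$ plus a small coupling Gaussian $X_{J(t)}$, and the event split forces the single-block factors to appear at a strictly positive level: one obtains terms of the form $\P(\sup_{I_i}\{\overline Z(t)\}<\sqrt{L})$ for some $L>0$ chosen so that the coupling term $\P(X_1\ge\sqrt{2L/3})$ is negligible. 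Sending $T_0\to\infty$ then yields $-\tfrac{1}{T_0}\log\P(\sup_{[0,T_0]}\{Z^{(\infty)}_t\}<\sqrt{L})\to b(A_\infty;\sqrt{L})$, the level-$\sqrt{L}$ persistence exponent of \eqref{c3}, not $b(A_\infty)=b(A_\infty;0)$. Closing the argument therefore requires showing $b(A_\infty;\varepsilon)\to b(A_\infty;0)$ as $\varepsilon\downarrow 0$, and this continuity is exactly where \eqref{eq:dm-120} (together with the monotonicity of $\tau\mapsto A_\infty(0,\tau)$) is used, via Li--Shao \cite[Theorem 3.1(iii)]{LS2}, which is how the paper verifies \eqref{eq:dm-116}. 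Your proposal instead invokes \eqref{eq:dm-120} as ``anti-concentration ruling out mass on the boundary of $\{\sup<0\}$''; that is not its role, and moreover the supremum of a nondegenerate Gaussian process on a compact interval is automatically atomless, so no extra hypothesis is needed for that step. Without the continuity of $\varepsilon\mapsto b(A_\infty;\varepsilon)$ your reverse inequality does not close.

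A secondary (expository, not logical) difference: the paper does not rerun the whole argument from scratch but invokes \cite[Theorem 1.6]{DM} as a black box, verifying its three hypotheses \eqref{eq:dm-115}--\eqref{eq:dm-117} and then only patching the part that depends on the correlation tail (replacing the polynomial decay \eqref{eq:dm-115} by the integrability \eqref{eq:weak-115}, which covers $\alpha=1$ via \eqref{eq:lem-2.1_lower_bound_new}). Your use of \eqref{eq:lem-2.1_lower_bound_new} to control the row sums of the coupling matrix is the right substitute for the quantitative decay available in \cite{DM}; that part of your proposal is sound. But you would still need to supply the level-continuity of $b(A_\infty;\cdot)$ to complete the proof.
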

\begin{proof} The statement \eqref{eq:dm-118} is shown in \cite[Theorem 1.6]{DM} 
to hold under the following assumptions:
\begin{align}
\label{eq:dm-115}
&\limsup\limits_{k,\tau\rightarrow\infty}\sup_{s\ge 0}\left\{\frac{\log A_k(s,s+\tau)}{\log \tau}\right\}<-1,\\
\label{eq:dm-116}-\limsup_{M\rightarrow\infty}\frac{1}{M}\log &\P(\sup_{t\in [0,M]}Z_t^{(\infty)}<M^{-\eta})=b(A_\infty)\text{ for all }\eta>0,
\end{align}
and there exists $\zeta>0,M_1<\infty$ such that for any $z\in [0,\zeta]$ we have
\begin{align}\label{eq:dm-117}
\notag\P(\sup_{t\in [0,M]}Z_t^{(\infty)}<z)\le &\liminf_{k\rightarrow\infty}\inf_{s\ge 0}\P(\inf_{t\in [0,M]}Z_{s+t}^{(k)}<z)\\
\le &\limsup_{k\rightarrow\infty}\sup_{s\ge 0}\P(\inf_{t\in [0,M]}Z_{s+t}^{(k)}<z)\le \P(\sup_{t\in [0,M]}Z_t^{(\infty)}\le z).
\end{align}
 We verify that both
\eqref{eq:dm-116} and \eqref{eq:dm-117} hold here, then adapt
the proof of \cite[Theorem 1.6]{DM} to apply also when 
$\alpha=1$ in \eqref{eq:weak-115} (while 
\eqref{eq:dm-115} follows from \eqref{eq:wh-a-rho} if $\alpha>1$).

It follows from the proof of \cite[Lemma 1.8]{DM}, that \eqref{eq:dm-123} yields the a.s. continuity of 
$s \mapsto Z^{(k)}_s$ for $1 \le k \le \infty$, and that for any $M<\infty$,
the collection $\{Z^{(k)}_{s+\cdot}, \, k \in \N, s \ge 0\}$ is uniformly 
tight in the space $\cC [0,M]$ of continuous functions on $[0,M]$, 
equipped with the topology of uniform convergence. 
This and $A_k(s,s+\cdot) \to A_\infty(0,\cdot)$ uniformly in $s$, 
result with \eqref{eq:dm-117}.
Indeed, the failure of \eqref{eq:dm-117} amounts to 
having $M<\infty$, $z \in \R$, $\epsilon>0$, 
$k_n \uparrow \infty$ and $s_n \ge 0$, such that either    
\begin{align}\label{c2}
& \inf_n \P(\sup_{t\in[0,M]}\{Z^{(k_n)}_{s_n+t}\}<z) \ge 
\P(\sup_{t\in[0,M]}\{Z^{(\infty)}_t\}\le z)+\epsilon \,,
\nonumber \\
\textrm{or} &  \\
& \sup_n \P(\sup_{t\in[0,M]}\{Z^{(k_n)}_{s_n+t}\}<z) \le 
\P(\sup_{t\in[0,M]}\{Z^{(\infty)}_t\} < z) -\epsilon \,.\nonumber
\end{align}
Since $A_{k_n}(s_n,s_n+\cdot) \to A_\infty(0,\cdot)$,
all f.d.d.-s of the Gaussian processes
$\{Z^{(k_n)}_{s_n+\cdot}\}$ 
converge to those of $Z^{(\infty)}_{\cdot}$.
Thus $Z^{(\infty)}_\cdot$ is the limit in distribution on $\cC [0,M]$ 
of $\{Z^{(k_n)}_{s_n+\cdot}\}$ and necessarily
$$
\sup_{t\in[0,M]}\{Z^{(k_n)}_{s_n+t}\}\stackrel{d}{\rightarrow}
\sup_{t\in[0,M]}\{Z^{(\infty)}_t\}\,,
$$
in contradiction with \eqref{c2}. 

Next, recall \cite[Theorem 3.1(iii)]{LS2}, 
that for non-increasing $\tau \mapsto A_\infty(0,\tau)$, \eqref{eq:dm-116}  
yields the continuity of $\varepsilon \mapsto b(A_\infty;\varepsilon)$, where 
\begin{align}\label{c3}
b(A_\infty,\varepsilon):=-\lim_{T\rightarrow\infty}\frac{1}{T}\log \P(\sup_{t\in [0,T]}Z_t^{(\infty)}<\varepsilon)
\end{align}
exists by Slepian's lemma, which in particular verifies the weaker condition \eqref{eq:dm-116} as well.

It thus remains to modify the proof of \cite[Theorem 1.6]{DM} to work under the assumption \eqref{eq:weak-115} instead of \eqref{eq:dm-115}.
Since the lower bound of \cite[Theorem 1.6]{DM} does not involve \cite[(1.15)]{DM} 
it suffices to adapt the proof of the matching upper bound. 
To this end, by \eqref{eq:weak-115} and the regular variation of $\widetilde{\rho}$,
there exist $k_0$, $\tau_0$ and $C$ finite such that $\wt{\rho}$ is 
non-increasing on $[\tau_0,\infty)$ and
\begin{equation}\label{eq:Ak-bd}
A_k(s,t) \le C \wt{\rho}(|s-t|), \qquad 
\forall k\ge k_0, s \ge 0, |t-s|\ge \tau_0 \,.
\end{equation}
Fixing $\delta>0$ and $M \ge \tau_0/\delta$, consider, 
as in \cite[proof of Theorem 1.6]{DM}, 
a maximal collection $\cJ_T$ of $N
$ 
intervals $I_i \subset [0,T]$ of length $M$ each, which are 
$\delta M$-separated. Then, setting 
\begin{equation}\label{eq:gam-def}
\gamma=\gamma(\delta M):=4 C \sum_{i=1}^\infty \wt{\rho}(i \delta M) \,,
\end{equation}
and the symmetric $N$-dimensional matrix ${\bf B}=\{B(i,j)\}$ with   
$B(i,i)=1$ and otherwise $B(i,j)=\frac{C}{\gamma} \wt{\rho}(|i-j| \delta M)$ 
non-increasing 
in  $|i-j|$, we have that 
%
$$
\max_{1\le i\le N} \sum_{j\ne i} B(i,j) \le \frac{2C}{\gamma}\sum_{i=1}^\infty 
\wt{\rho}(i \delta M) \le \frac{1}{2}\,.
$$
Hence,
all eigenvalues of ${\bf B}$ lie within $[1/2,3/2]$. Further, 
if $s \in I_i$ and $t \in I_j$, then $A_k(s,t) \le \gamma B(i,j)$  
by \eqref{eq:Ak-bd} and the monotonicity of $\wt{\rho}$. Consequently, 
the relation of \cite[(2.3)]{DM} holds for any $s,t \in \cJ_T$. The latter
allows us to proceed along the derivation of \cite[(2.4)-(2.6)]{DM}, 
except for replacing the terms $\gamma^\delta$, from the \abbr{rhs} of 
\cite[(2.4)]{DM} onward, by $\varepsilon>0$ (independent of $\gamma$). 
We thus deduce the following variant of \cite[(2.6)]{DM}, 
\begin{align}\label{c_final}
\limsup\limits_{k,T \rightarrow\infty} \frac{1}{T}\log \mathbb{P}( & \sup_{t\in [0,T]}
\{Z^{(k)}_t\}<0) \nonumber \\
\leq &\frac{1}{M(1+\delta)}\log\Big[\mathbb{P}(\sup_{t\in[0,M]}\{Z_t^{(\infty)}\}< 3 \varepsilon)+\sqrt{3}\mathbb{P}(X_1\geq \sqrt{2/3}\varepsilon\gamma^{-1/2})\Big]\,.
\end{align}
Here $X_1$ is standard normal and $M \gamma(\delta M) \to 0$ 
when $M \to \infty$
(by \eqref{eq:lem-2.1_lower_bound_new} and \eqref{eq:gam-def}), hence
\begin{equation}\label{eq:x1-bd}
\limsup\limits_{M\rightarrow\infty}\frac{1}{M}\log  
\mathbb{P}(X_1\geq \sqrt{2/3}\varepsilon\gamma^{-1/2})\leq -
\frac{\varepsilon^2}{6}\liminf_{M\rightarrow\infty} (M\gamma)^{-1} =-\infty \,,
\end{equation}
so in the limit $M \to \infty$ the \abbr{rhs} of \eqref{c_final} 
is at most $-b(A_\infty;3 \varepsilon)/(1+\delta)$, for $b(A_\infty;\cdot)$ of 
\eqref{c3}. 
Thus, considering 
$\varepsilon,\delta\downarrow 0$ yields the upper bound of \cite[Theorem 1.6]{DM}.

Finally, from Lemma \ref{clm1} the events 
$\{\sup_{t\in[0,M]}\{Z_t^{(\infty)}\}< 0\}$
have positive probability, hence $b(A_\infty;0)$ is finite (by 
the non-negativity of $A_\infty$ and Slepian's lemma). Further, in view of
\eqref{eq:x1-bd} the \abbr{rhs} of \eqref{c_final} is strictly negative 
for $M$ large enough, hence its \abbr{lhs}, namely $-b(A_\infty;0)$ is also 
strictly negative.
\end{proof}

\begin{proof}[Proof of part (e) of Corollary \ref{sak_improved}:]
This is a direct application of 
Lemma \ref{clm3}. 
Indeed, 
the relation between occupation times of ${\bf 0}$ 
by  $S_u^{(q)}$, and the number of returns to ${\bf 0}$
by the corresponding embedded discrete time random walk,
implies that   
$$
I_{\rho^{(q)}}(\infty) - I_{\rho^{(q)}}(\tau)= \E[\,G^{(q)}_{N_\tau}\,] \,,
$$ 
for a unit rate Poisson process $\{N_\tau\}$.
Thus, here the auto-correlation of \eqref{eq:Arho-def} is 
\begin{equation}\label{eq:A-q-def}
\overline{C}_{\rho^{(q)}}(0,\tau)=\frac{1}{G^{(q)}_0} \E[\,G^{(q)}_{N_\tau}\,] \,.
\end{equation}
Clearly, $G^{(q)}_0 - 1 = G_1^{(q)} 
\ge G^{(q)}_k\ge G^{(q)}_{k+1}$ for all $k \ge 1$, resulting with 
the bounds 
\begin{equation}\label{eq:bd-Gnq}
1-\P(N_\tau\ge 1) \le \overline{C}_{\rho^{(q)}}(0,\tau)\le 
1-\frac{1}{G^{(q)}_0} \P(N_\tau\ge 1)\,,
\end{equation}
so by the assumed convergence to one of 
$G^{(q_d)}_0$ we have that 
$$
\lim_{d\rightarrow\infty} 
\overline{C}_{\rho^{(q_d)}} (0,\tau) = 1-\P(N_\tau\ge 1)=e^{-\tau}.
$$
Recall that the stationary Ornstein-Uhlenbeck (\abbr{OU}) process
has persistence exponent $b=1$, continuous sample path and the 
correlation function $e^{-|\tau|}$ for which holds.
In addition, from the uniform lower bound on the \abbr{lhs} of \eqref{eq:bd-Gnq}, 
$$
\lim_{u \to 0} |\log u|^2 \sup_d (1-\overline{C}_{\rho^{(q_d)}}(0,u)) = 0 \,,
$$
so \eqref{eq:dm-123} holds as well. 
  Finally, from \eqref{eq:A-q-def}
$$
\overline{C}_{\rho^{(q_d)}}(0,\tau) \le \P(N_\tau \le \tau/2) + G^{(q_d)}_{\tau/2} \,,
$$
hence \eqref{eq:weak-115} follows from the assumed uniform tail bound 
$G^{(q_d)}_{\tau/2} \le \kappa \tau^{-2}$. 
\end{proof}

Equipped with Lemma \ref{clm3} we proceed to establish Theorem \ref{all}.

\begin{proof}[Proof of Theorem \ref{all}]$~$ 

\noindent
(a). To prove \eqref{sum}, we apply Lemma \ref{clm3} for the 
normalized, centered Gaussian processes $\{Z^{(k)}_t\}$, $1 \le k \le \infty$,
of correlation functions
$$
A_k(s,t):=C_\rho(s+k,t+k) \,,\qquad A_\infty(s,t):=\overline{C}_{\rho}(s,t),
\quad  s,t\ge 0\,.
$$
Specifically, from \eqref{eq:Arho-def} we see that $A_\infty(0,\tau)$
is non-increasing and $A_k(s,s+\tau) \to A_\infty(0,\tau)$ as $k \to \infty$, 
uniformly in $s \ge 0$. Further, 
with $\rho \in \cR_\gamma$ uniformly bounded 
and $I_\rho(\cdot)$ strictly positive, non-decreasing, 
it follows from \eqref{A2} that for $s \ge 1$, $\tau>0$, 
$$
1 - C_\rho(s,s+\tau) \le \frac{I_\rho(2s+2\tau)-I_\rho(2s+\tau) + I_\rho(\tau)}{I_\rho(2s)}
\le \frac{2 \sup_{x \ge 0} \{\rho(x)\}}{I_\rho(2)} \tau \,.
$$
Thus $t \mapsto Y_\rho(t)$ is a.s. continuous on $[1,\infty)$ and with the 
preceding holding for $\overline{C}_\rho(\cdot)$, so does \eqref{eq:dm-123}.
Since $C_\rho(\cdot)$ is non-negative, by Slepian's lemma we have that for any 
$k\in (1,T)$,
\begin{align*}
\P(\sup_{t\in[k,T+k]}\{Y_\rho(t)\}<0)\ge 
\P(\sup_{t\in[1,T+k]}\{Y_\rho(t)\}<0)\ge \P(\sup_{t\in[1,k]}\{Y_\rho(t)\}<0)\P(\sup_{t\in[k,T+k]}\{Y_\rho(t)\}<0)
\end{align*}
and with $Y_\rho(t)$ continuous the first term on the \abbr{rhs} is strictly 
positive (see Lemma \ref{clm1}). It thus suffices to confirm that 
\begin{align}\label{chk}
- \lim\limits_{k\rightarrow\infty}\lim\limits_{T\rightarrow\infty}
\frac{1}{T}\log \P(\sup_{t\in[k,T+k]}\{Y_\rho(t)\}<0)=b(\overline{C}_\rho)\,.
\end{align}
With $Z^{(k)}_t=Y_\rho(k+t)$, the identity \eqref{chk} is merely \eqref{eq:dm-118}. We
thus complete the proof upon verifying the remaining two assumptions of Lemma \ref{clm3}, 
first showing that \eqref{eq:weak-115} holds for the integrable 
$\wt{\rho}(s)=s \rho(s)$, then establishing the positivity of 
$a^2_{h,\theta}(\overline{C}_\rho)$ of \eqref{eq:dm-120}.
Turning to the first task, setting $s_k=2s+2k \ge 2$ note that for any $s \ge 0$,
\begin{equation}\label{eq:bd-Crho}
A_k(s,s+\tau)=\frac{I_\rho(s_k+\tau)-I_\rho(\tau)}{\sqrt{I_\rho(s_k+2\tau)I_\rho(s_k)}}\le \frac{I_\rho(\infty)-I_\rho(\tau)}{I_\rho(2)} \,,
\end{equation}
out of which we get \eqref{eq:weak-115}, since by \eqref{eq:slow_estimate>1} 
(for $a \uparrow \infty$ and $\alpha=\gamma>1$), 
\begin{equation}\label{eq:lim-Irho}
\frac{I_\rho(\infty)-I_\rho(\tau)}{\tau \rho(\tau)} \to \frac{1}{\gamma-1} < \infty \,.
\end{equation}
Next, setting $g(\theta,\tau):=\int_\theta^1 \rho(\tau y) dy$ we have from 
\eqref{eq:Arho-def} that 
$$
a^2_{h,\theta} (\overline{C}_\rho)
=\inf_{0<\tau<h} 
\Big\{\frac{g(\theta,\tau)}{g(0,\tau)} \Big\}
\ge (1-\theta) \frac{\inf_{x \in [0,h]}\rho(x)}{\sup_{x\in[0,h]}\rho(x)} > 0 \,,
$$
by our hypothesis that $\rho\in\cR_\gamma$ is uniformly bounded 
away from zero on compacts.

\medskip
\noindent
(b). Considering the Lamperti transformation $t=e^v$ on $[0,V]$ (where 
$T=e^V$), similarly to part (a), due to Lemma \ref{clm1} and the sample path 
continuity of $Y_\rho(\cdot)$ we establish \eqref{sum2} upon showing that 
\begin{align}\label{chk-b}
- \lim\limits_{k\rightarrow\infty}\lim\limits_{V\rightarrow\infty}\frac{1}{V}
\log \P(\sup_{v\in[0,V]}\{Y_\rho(e^{v+k})\}<0)=b(C^\star_\gamma) \in (0,\infty) \,.
\end{align}
The identity \eqref{chk-b} is merely \eqref{eq:dm-118} for the 
centered Gaussian processes $Z^{(k)}_v = Y_\rho(e^{v+k})$ of correlation functions
$$
A_k(v,u):=C_\rho(e^{v+k},e^{u+k}),\quad A_\infty(v,u):=C^\star_\gamma(v,u), 
\quad v,u \ge 0\,.
$$
Thus, \eqref{chk-b} follows once we verify all the assumptions of Lemma \ref{clm3},
at least for all $k \ge k_0$ finite. 
To this effect, for $\gamma \in [0,1)$ we have from \eqref{eq:Agam-def} that 
$A_k(v,v+\tau) \to A_\infty(0,\tau)$, uniformly over $v \ge 0$, with 
$A_\infty(0,\tau)$ non-increasing. Further, for any $\theta \in (0,1)$, 
setting $g(0)=1-\theta^{1-\gamma}>0$ makes 
$$
g(\tau):=
\frac{C^\star_\gamma(0,\theta \tau)-C^\star_\gamma(0,\tau)}{1-C^\star_\gamma(0,\tau)}\,,
$$
a continuous and strictly positive function on $[0,\infty)$. Thus, 
$a^2_{h,\theta}(C^\star_\gamma)$ being the infimum 
of $g(\tau)$ over $[0,h]$ must be positive, and so \eqref{eq:dm-120} holds.
\newline
Turning next to verify \eqref{eq:weak-115}, setting $v_k:=e^{v+k} \ge e^k$ we have 
that 
\begin{equation}\label{eq:Ak-uk}
A_k(v,v+\tau)=\frac{I_\rho((e^\tau+1)v_k)-I_\rho((e^\tau-1)v_k)}{\sqrt{I_\rho(2v_k)
I_\rho(2 e^\tau v_k)}} \,, \qquad \forall v,\tau \ge 0\,. 
\end{equation}
Hence, three applications of \eqref{eq:slow_estimate<1} with $\alpha=\gamma$, 
for $b=(e^{\tau}+1)v_k$, 
$b=2e^{\tau} v_k$ and $b=2v_k$, 
yield 
that 
\begin{align}\label{nn1}
 \lim\limits_{k\rightarrow\infty}\sup_{v,\tau\ge 0}\Big| R_\rho (v_k,\tau)  
 \frac{A_k(v,v+\tau)}{A_\infty(0,\tau)}-1\Big|=0 \,,
\end{align}
where by the eventual monotonicity of 
$x^{\pm 2\eta} L_\rho(x)$ (see Remark \ref{bgtslow}), we further get that
\begin{equation}\label{nn2}
R_\rho(v,\tau) := 
\frac{\sqrt{L_\rho(2v)L_\rho(2 e^\tau v)}}{L_\rho((e^\tau+1) v)} 
\in (e^{-\tau \eta},e^{\tau \eta}) \,,
\end{equation}
for any $\eta>0$, $v \ge v_0(\eta)$ and all $\tau \ge 0$. 
Since $\gamma \mapsto C^{\star}_\gamma(0,\tau)$ is non-increasing, we have that 
for any $\gamma \in [0,1)$,  
$$
A_\infty(0,\tau) \le C^{\star}_0 (0,\tau) = e^{-|\tau|/2} \,.
$$    
Combining this with \eqref{nn1} and \eqref{nn2} (say, for $\eta=1/4$), 
we deduce that $A_k(v,v+\tau) \le 2 e^{-\tau/4}$ for any $k \ge k_0$ 
and all $v,\tau \ge 0$, which is more than enough for \eqref{eq:weak-115}.
\newline
It thus remains to verify \eqref{eq:dm-123}. To this effect, set $\xi:=(1-e^{-\tau})/2 \le \tau$ and 
\begin{equation}\label{eq:F-def}
f(\xi;b,\tau) := \frac{I_{\rho} (b) - I_\rho((1-\xi)b) + I_\rho(\xi b)}
{I_\rho(e^{-\tau} b)} \,.
\end{equation}
Then, considering $b=2 e^\tau v_k$ in \eqref{eq:Ak-uk}, we find 
that for any $v,\tau \ge 0$ and finite $k \ge 1$,  
$$
1-A_k(v,v+\tau) \le \sup_{b \ge 2 e^{k+\tau} } \{f(\xi;b,\tau)\} \,.
$$
As $1-C^\star_\gamma(0,\tau) \le |\tau|^{1-\gamma}$ for $|\tau|$ small enough, 
we get \eqref{eq:dm-123} upon showing that for $\eta>0$ and 
$\kappa,b_0$ finite,
$f(\xi;b,\tau) \le \kappa \xi^{1-\gamma-\eta}$, uniformly over $\tau \in [0,1]$ 
and $b \ge b_0$. To this end, setting  
$$
F_b(a_1,a_2)=\int_{a_1}^{a_2} \frac{L_\rho(by)}{L_\rho(b)} d\mu_\gamma(y) \,,
$$
for $0 \le a_1 \le a_2 \le 1$ and the measure $\mu_\gamma$ on $[0,1]$ of density 
$(1-\gamma) y^{-\gamma}$, recall \eqref{eq:Fb-limit} that
$$
\lim_{b \to \infty} \sup_{\delta \in [0,1)} \Big|
\frac{F_b(\delta,1)}{1-\delta^{1-\gamma}}  -1 \Big| = 0 
$$ 
and \eqref{eq:small-y} that for some $b_0$ finite,
$$
\sup_{b \ge b_0} \sup_{\delta \in (0,1]} \Big\{ \,
\frac{F_b(0,\delta)}{\delta^{1-\gamma-\eta}} \,\Big\} < \infty \;.
$$ 
Further, we find as in the proof of Lemma \ref{clm2}(a), that
$$
f(\xi;b,\tau) = \frac{F_b(1-\xi,1)+F_b(0,\xi)}{F_b(0,e^{-\tau})}\,,
$$ 
where by the preceding, once $b$ is large enough 
$F_b(1-\xi,1) \le 2 \xi$ and $F_b(0,\xi) \le \kappa \xi^{1-\gamma-\eta}$
for all $\xi$, 
while $F_b(0,e^{-\tau}) \ge F_b(0,e^{-1})$ 
are bounded below away from zero. 

\medskip
\noindent
(c). We get \eqref{nonsum} upon applying Theorem \ref{gen2} for 
the centered Gaussian process $Y_\rho(t)$, $t \in [1,\infty)$
(with non-integrable $\wt{\rho}(s)=s \rho(s) \in \cR_{\gamma-1}$). 
Turning to verify the three hypothesis of Theorem \ref{gen2}, 
recall first that while proving part (a) we 
saw that $t \mapsto Y_\rho(t)$ is a.s. continuous and further showed
that $\sup_{s \ge 1} (1-C_\rho(s,s+\tau))$ decay fast enough 
in $\tau \to 0$ to imply that 
$\E[\sup_{t \in [0,1]} \{Y_\rho(s+t)\}]$ is uniformly bounded in $s \ge 1$.
Next, similarly to \eqref{eq:bd-Crho},
$$
C_\rho(t,t+\tau) \le \frac{I_\rho(\infty)-I_\rho(\tau)}{I_\rho(2)}
$$
and \eqref{nonsum2} follows from \eqref{eq:lim-Irho}. Finally, if
$\tau \in [0,\eta t]$ then by \eqref{eq:slow_estimate>1} with $\alpha=\gamma$, 
we deduce that for $\tau \to \infty$, 
$$
\frac{C_\rho(t,t+\tau)}{\wt{\rho}(\tau)} \ge 
\frac{I_\rho(h\tau)-I_\rho(\tau)}{\wt{\rho}(\tau) I_\rho(\infty)} \to
\frac{1-h^{1-\gamma}}{(\gamma-1)I_\rho(\infty)} \,,
$$
which since $h:=1+2/\eta$ diverges with $\eta \downarrow 0$, yields \eqref{nonsum1}
and thereby proves \eqref{nonsum}. 
\end{proof}

\end{document}